\newcommand{\tabincell}[2]{\begin{tabular}{@{}#1@{}}#2\end{tabular}}
\newtheorem{theorem}{\textbf{Theorem}}
\newtheorem{lemma}{\textbf{Lemma}}
\newtheorem{assumption}{Assumption}
\newtheorem{remark}{Remark}
\newtheorem{definition}{Definition}
\newtheorem{example}{Example}
\def\BibTeX{{\rm B\kern-.05em{\sc i\kern-.025em b}\kern-.08em
    T\kern-.1667em\lower.7ex\hbox{E}\kern-.125emX}}
\begin{document}
\title{Submodularity-based False Data Injection Attack Scheme in Multi-agent Dynamical Systems }
\author{Xiaoyu Luo$^\dag$, \IEEEmembership{Student Member, IEEE}, Chengcheng Zhao$^\ddag$, Chongrong Fang$^\dag$, \IEEEmembership{Member, IEEE}, \\and Jianping He$^\dag$, \IEEEmembership{Senior Member, IEEE}
	\thanks{$^\dag$: The Department of Automation, Shanghai Jiao Tong University, and Key Laboratory of System Control and Information Processing, Ministry of Education of China, Shanghai 200240, China. E-mail: xyl.sjtu@sjtu.edu.cn, crfang@sjtu.edu.cn, jphe@sjtu.edu.cn. This research work is partially sponsored by NSF of China 61973218 and 62072308.}
	\thanks{$^\ddag$: The State Key Laboratory of Industrial Control Technology and Institute of Cyberspace Research, Zhejiang University, China, and the Department of Electrical and Computer Engineering, University of Victoria, BC, Canada. E-mail: zccsq90@gmail.com. }}

%\author{Xiaoyu Luo, \IEEEmembership{Student Member, IEEE}, Second B. Author, and Third C. Author, Jr., \IEEEmembership{Member, IEEE}
%\thanks{This paragraph of the first footnote will contain the date on 
%which you submitted your paper for review. It will also contain support 
%information, including sponsor and financial support acknowledgment. For 
%example, ``This work was supported in part by the U.S. Department of 
%Commerce under Grant BS123456.'' }
%\thanks{The next few paragraphs should contain 
%the authors' current affiliations, including current address and e-mail. For 
%example, F. A. Author is with the National Institute of Standards and 
%Technology, Boulder, CO 80305 USA (e-mail: author@boulder.nist.gov). }
%\thanks{S. B. Author, Jr., was with Rice University, Houston, TX 77005 USA. He is 
%now with the Department of Physics, Colorado State University, Fort Collins, 
%CO 80523 USA (e-mail: author@lamar.colostate.edu).}
%\thanks{T. C. Author is with 
%the Electrical Engineering Department, University of Colorado, Boulder, CO 
%80309 USA, on leave from the National Research Institute for Metals, 
%Tsukuba, Japan (e-mail: author@nrim.go.jp).}}

\maketitle

\begin{abstract}
Consensus in multi-agent dynamical systems is prone to be sabotaged by the adversary, which has attracted much attention due to its key role in broad applications. In this paper, we study a new false data injection (FDI) attack design problem, where the adversary with limited capability aims to select a subset of agents and manipulate their local multi-dimensional states to maximize the consensus convergence error. We first formulate the FDI attack design problem as a combinatorial optimization problem and prove it is NP-hard. Then, based on the submodularity optimization theory,
we show the convergence error is a submodular function of the set of the compromised agents, which satisfies the property of diminishing marginal returns. In other words, the benefit of adding an extra agent to the compromised set decreases as that set becomes larger. With this property, we exploit the greedy scheme to find the optimal compromised agent set that can produce the maximum convergence error when adding one extra agent to that set each time. Thus, the FDI attack set selection algorithms are developed to obtain the near-optimal subset of the compromised agents. Furthermore, we derive the analytical suboptimality bounds and the worst-case running time under the proposed algorithms. Extensive simulation results are conducted to show the effectiveness of the proposed algorithm.
\end{abstract}

\begin{IEEEkeywords}
Dynamical systems, false data injected attacks, submodular function
\end{IEEEkeywords}

\section{Introduction}
\label{sec:introduction}
\IEEEPARstart{R}{ecent} years have witnessed the wide application of the network technology to multi-agent dynamical systems where each agent is endowed with its dynamics, including smart grids \cite{zhao2016consensus}, automotive \cite{marzbani2019autonomous}, industrial automation \cite{gungor2009industrial}, etc. Especially, solving consensus problems is of great importance for multi-agent dynamical systems, which aims to reach an agreement via local interaction \cite{olfati2007consensus}.
However, the open network and vulnerable protection provide the chance for the adversary to infiltrate the system secretly to disturb communication links or manipulate the transmitted information, which will ultimately lead to poor control performance, e.g., low convergence rate, huge convergence error, and instability. Meanwhile, the capability of the adversary is usually limited \cite{qin2020optimal}.
Studying the degree of the damage caused by an adversary with limited capacity is beneficial for designing secure algorithms to protect multi-agent dynamical systems.

According to the impact of attacks, we can roughly divide attacks into two categories for multi-agent dynamical systems, including deception attacks (the transmitted information is manipulated) and DoS attacks (the transmitted information is lost or delayed). It has been pointed out by Dibaji \emph{et al.} that deception attacks are the most severe attacks in terms of the level of damages\cite{dibaji2019systems}. Thus, studying deception attacks in multi-agent dynamical systems is important, especially the false data injection (FDI) attack, which is a classical deception attack by injecting false data to manipulate the transmitted information. Before we deploy the multi-agent dynamical systems for practical applications, we need to use host computers to support online reprogramming and debugging via insecure communication protocols for each agent, which allows the adversary to identify cryptographic keys and further reprogram the control rules for multiple agents \cite{Tbone2021}. It is promising to study the scenario that the number of compromised agents is limited because when the number of the compromised agents is too large, it is impossible for the system to maintain stability or reach consensus \cite{dolev1982byzantine}. 
%In practice, there are many FDI attacks that occurred in various scenarios. For example, Stuxnet was an FDI attack on an Iranian uranium enrichment plant, which infiltrates systems secretly via injected USB and injects malicious signals into the critical actuation sites, resulting in the damage to a large of centrifuges \cite{langner2011stuxnet}. 
%Note that multiple actuation sites are compromised simultaneously and the strongest ability of the adversary is executing a highly synchronized and multisite attack \cite{case2016analysis}. 
Therefore, to analyze the adversaries' potential impact on consensus thoroughly, it is interesting to explore the scenario where multiple agents are compromised by the adversary with limited capability concurrently.

Lots of literature is dedicated to the multi-agent dynamical systems under FDI attacks against consensus where the number of compromised agents is limited \cite{dolev1986reaching,kieckhafer1994reaching,Leblanc2013Resilient,dibaji2017resilient,fu2019resilient,tsang2020sparse} and the references therein. For example, Dolve \emph{et al.} \cite{dolev1986reaching} and Kiechkhafer \emph{et al.} \cite{kieckhafer1994reaching} constructed an $F$-total FDI attack model where the total number of compromised agents is at most $F$ in network. LeBlance \emph{et al.} \cite{Leblanc2013Resilient} introduced a new notion called network robustness to establish the $F$-local FDI attack model where there are at most $F$ compromised agents among each agent's neighbors. In \cite{tsang2020sparse}, Tsang \emph{et al.} considered an FDI attack in the consensus protocol and formulated a non-convex optimization problem to destabilize the system where at most $F$ agents are compromised at each iteration. The aforementioned works only focus on unidimensional or two-dimensional state interaction and analyze the system performance given some attack strategies.
Furthermore, few works investigate how the adversary with limited capability should select the subset of the compromised agents and mount FDI attacks to cause severe damage. Although there are plenty of works for FDI attack design problems against state estimation \cite{liu2011false,mo2010false2,zhang2019optimal}, the results cannot be applied directly since the problems are different.

Based on the above observations, we are motivated to consider a novel FDI attack design problem where the adversary desires to select a subset of agents and inject false data to their local multi-dimensional states to maximize the convergence error, i.e., the error between the final state with attacks and that without attacks. Our work is oriented toward the design of the attack set selection strategy with limited cost (the total attack cost is limited and known), which is different from the fixed number of compromised agents when the adversary allocates the attack cost to each agent based on its degree in networks \cite{hong2013effect}. Compared to our conference version \cite{2022Luosubmodular}, we extend the analysis of the FDI attack design problem to the case where the deterministic injected false data varies over time, and develop an improved FDI attack set selection algorithm to reduce the running time. Moreover, we provide more details to enrich the motivation, interpretation of submodular conditions of the convergence error, performance analysis, and simulation results.
The main contributions are summarized as follows.

		\begin{itemize}
			\item We investigate the FDI attack design problem where the adversary with limited capability aims to maximize the convergence error by compromising a subset of agents and manipulating their local multi-dimensional states. 
			\item We prove the considered problem is NP-hard and show that the convergence error is a submodular function of the set of the compromised agents.
			\item We develop two FDI attack set selection algorithms to obtain the near-optimal compromised subset, combining the submodular conditions and greedy algorithm. The proposed algorithms are applicable to all kinds of deterministic attack strategies.
			\item We derive the analytical suboptimality bounds and the worst-case running time under the proposed algorithms. 
			Extensive simulation results are conducted to show the effectiveness of the proposed algorithms.
		\end{itemize}
		
		The rest of the paper is organized as follows. Section \ref{II} introduces the system model and the attack model, and formulates the FDI attack design problem. In Section \ref{III}, the submodularity of the convergence error is analyzed and an algorithm is designed to obtain the suboptimality bound. In Section \ref{IV}, we extend the results to the case when the injected false data varies over time. In addition, further discussion is shown in Section \ref{V}. Simulation results are presented in Section \ref{VI}. Finally, we conclude our work in Section \ref{VII}.\\
\textbf{Notations.} Let $\mathbb{R}_{\geq 0}$, $\mathbb{C}$, and $\mathbb{Z}^+$ denote the set of non-negative real numbers, the set of complex numbers, and the set of positive integers, respectively. For a vector $\mathbf{p} \in \mathbb{R}^{n}$, we let $\|\mathbf{p}\|$ denote its $l_2$-norm, and $\mathbf{p}^\mathrm{T}$ denote its transpose. We denote $I_n$ and $\textbf{1}_n$ as the $n$ dimensional diagonal unit matrix and column vector, respectively. We denote $e_i$ as the canonical vector with $1$ in the $i$-th entry and $0$ elsewhere. For matrix $P\in \mathbb{R}^{n\times n}$, we use $\mathrm{Ker}(P)$ to denote its
kernel space, and $\mathrm{rank}(P)$ to denote its rank. The symbol $\otimes$ denotes the Kronecker product and $\mathrm{diag}(\cdot)$ denotes the diagonal matrix. The set of $0$-$1$ indicator vectors of dimension $n$ is denoted as $\{0,1\}^{n}$.

\section{PROBLEM FORMULATION}\label{II}	
\subsection{Network Model}
%	{\color{blue}{
Consider a system composed by $n\in\mathbb{Z}^+$ homogeneous autonomous agents and each agent has a unique ID number denoted by $i={1,2,\cdots,n}$. A strongly undirected connected graph $\mathcal{G}=\{ \mathcal{V}, \mathcal{E} \}$ is used to model the communication topology among $n$ agents, where $\mathcal{V}=\{1, 2, \cdots, n \}$ is the set of agents and $\mathcal{E}\subseteq \mathcal{V} \times \mathcal{V}$ is the edge set. The neighbor set of agent $i$ is denoted by ${\mathcal{N}_i} = \left\{ {j|\left( {i,j} \right) \in \mathcal{E}},\forall j \in \mathcal{V} \right\}$ with its cardinality $d_i=|\mathcal{N}_i|$, where $(i, j)\in \mathcal{E} $ illustrates that agent $i$ can receive information from agent $j$. The degree matrix is a diagonal matrix defined as $D=\text{diag}\{D_{ij}\} \in \mathbb{R}^{n\times n}$ with $D_{ii} =d_i$ for all $i\in \mathcal{V}$. Additionally, we define $d_{\mathrm{max}}$ as the maximum degree over all agents in the graph. The adjacency matrix is represented by $V=\left[ v_{ij}\right]\in \mathbb{R}^{n\times n}$, with $v_{ij}=1$ if $(i, j)\in \mathcal{E} $, and $v_{ij}=0$ otherwise. We do not consider self-loops here, i.e., $v_{ii}=0$.
Then, the Laplacian matrix is written as $L=D-V$. 

\subsection{System Dynamic Model }
Each agent has a continuous time-invariant linear system model and the dynamics of the $i$-th agent for any $i \in \mathcal{V} $ are
\begin{equation}\label{1}
\dot{\mathbf{x}}_i(t)=A\mathbf{x}_i(t)+B \mathbf{u}_i(t),  
\end{equation}
where $A$, $B$ are constant matrices, $\mathbf{x}_i(t)\in \mathbb{R}^{m}$ and $\mathbf{u}_i(t)\in \mathbb{R}^{m}$ with $m\in \mathbb{Z}^{+}$ denote the state and the control input of agent $i$ at time $t$, respectively. We use the following linear consensus protocol \cite{Olfati2004Murray}, i.e., 
\begin{equation}\label{2}
\mathbf{u}_i(t)=\sum_{j\in \mathcal{N}_i}W_{ij}(\mathbf{x}_j(t)-\mathbf{x}_i(t)),
\end{equation} 
where $W\in \mathbb{R}^{m\times m}$ is a weight matrix with nonnegative entries. The entries are chosen as $W_{ij}=v_{ij} \bar{d}$ where $\bar{d} \in (0,\frac{1}{d_{\mathrm{max}}})$ \cite{xiao2004fast}. 
Then, for any $i \in \mathcal{V}$, 
\begin{equation}\label{3}
\dot{\mathbf{x}}_i(t)= A \mathbf{x}_i(t)+B \sum_{j\in \mathcal{N}_i} v_{ij}\bar{d} (\mathbf{x}_j(t)-\mathbf{x}_i(t)).
\end{equation}
Let $\mathbf{x}(t)=[\mathbf{x}_1(t)^{\mathrm{T}},\cdots,\mathbf{x}_n(t)^{\mathrm{T}}]^{\mathrm{T}}\in \mathbb{R}^{mn}$ be the global state of the system, whose dynamics can be written in a compact form as follows.
\begin{equation}\label{4}
\dot{\mathbf{x}}(t)=M \mathbf{x}(t), 
\end{equation}
where $M=(I_n \otimes A-\bar{d}L\otimes B) \in \mathbb{R}^{mn \times mn} $ is the system matrix and its eigenvalues are denoted as $\sigma_i$ with $ i\in \{1,2,\cdots, mn\}$. The spectrum of matrix $L$ is denoted by ${\lambda_1,\lambda_2,\cdots,\lambda_{n}}$ and satisfies $\mathrm{Re(\lambda_1)}\leq \mathrm{Re(\lambda_2)}\leq \cdots\leq \mathrm{Re(\lambda_n)}$, for any $i\in \mathcal{V}$. Then, there exists a non-singular matrix $U\in \mathbb{C}^{n\times n}$ such that $ULU^{-1}=J$, where $J$ is the diagonal matrix whose diagonal elements are eigenvalues $\lambda_i, i\in \mathcal{V}$, and the columns of the complex matrix $U$ are the corresponding eigenvectors of $L$. Here, we provide the following assumption and lemma on the global system stability. 

\begin{assumption}\label{ass1}
	The dynamic system $(A,B)$ is controllable. The global system (\ref{4}) can be Hurwitz stable under the chosen control protocol (\ref{2}).
\end{assumption}
%    \begin{assumption}
%    There is no defense strategy in the system.
%    \end{assumption}

\begin{lemma}\label{lemma1}
		(\textbf{Consensus conditions\cite{xiao2007consensus})}
	System (\ref{4}) solves a consensus problem if and only if 
	\begin{itemize}
		\item [i)] $\mathrm{rank}(M)=\mathrm{rank}(M^2)$;
		\item [ii)] For $\forall i \in \mathcal{V}$, it holds $\sigma_i=0$ or $\mathrm{Re(\sigma_i)}<0$; 
		\item [iii)] if $\sigma_i=0$, $\exists i \in \mathcal{V}$, then for any $\xi \in \mathrm{Ker}(M)$, there exists a vector $\mathbf{b}\in \mathbb{R}^{m}$ such that $\xi=\mathbf{1}_{n}\otimes \mathbf{b}$.
	\end{itemize}
\end{lemma}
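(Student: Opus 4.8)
The plan is to study the closed-form solution $\mathbf{x}(t)=e^{Mt}\mathbf{x}(0)$ and to translate each of the three conditions into a requirement on the asymptotic behavior of the matrix exponential. I would first fix the meaning of ``solving a consensus problem'': for every initial state $\mathbf{x}(0)$, the trajectory converges to the consensus subspace $\mathcal{C}=\{\mathbf{1}_n\otimes \mathbf{b}:\mathbf{b}\in\mathbb{R}^m\}$, i.e. $\lim_{t\to\infty}\mathbf{x}(t)$ exists and lies in $\mathcal{C}$. Since this must hold for arbitrary $\mathbf{x}(0)$, it is equivalent to requiring that $P:=\lim_{t\to\infty}e^{Mt}$ exists and that $\mathrm{Range}(P)\subseteq\mathcal{C}$. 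The whole proof then reduces to characterizing, in terms of the spectrum and kernel of $M$, (a) when this limit exists, and (b) when its range is contained in $\mathcal{C}$.

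For part (a) I would pass to the Jordan form $M=SJ_MS^{-1}$, so that $e^{Mt}=Se^{J_Mt}S^{-1}$ and each Jordan block attached to an eigenvalue $\sigma_i$ contributes entries of the type $\tfrac{t^k}{k!}e^{\sigma_i t}$. Examining these term by term: blocks with $\mathrm{Re}(\sigma_i)>0$ diverge and blocks with $\sigma_i$ nonzero and purely imaginary oscillate without a limit, so the existence of $P$ forces every eigenvalue to satisfy $\sigma_i=0$ or $\mathrm{Re}(\sigma_i)<0$, which is condition (ii). The remaining danger is a nontrivial Jordan block at $\sigma_i=0$, which contributes unbounded polynomial terms $t^k/k!$ with $k\ge 1$; ruling this out is exactly the requirement that the zero eigenvalue be semisimple (index at most one). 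I would establish that $\mathrm{rank}(M)=\mathrm{rank}(M^2)$ is precisely this algebraic criterion, using the standard fact that $\mathrm{rank}(M^k)$ strictly decreases until $k$ reaches the index of the eigenvalue $0$ and is constant thereafter; equality at $k=1,2$ therefore means index one, i.e. no Jordan block of size $\ge 2$ at $0$. This is condition (i). Under (i) and (ii) the stable modes vanish while the zero modes stay fixed, so $P$ exists and equals the spectral projection onto $\mathrm{Ker}(M)$ along the stable subspace, giving $\mathrm{Range}(P)=\mathrm{Ker}(M)$.

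For part (b), since $\mathrm{Range}(P)=\mathrm{Ker}(M)$, the inclusion $\mathrm{Range}(P)\subseteq\mathcal{C}$ holds if and only if $\mathrm{Ker}(M)\subseteq\mathcal{C}$, i.e. every $\xi\in\mathrm{Ker}(M)$ has the form $\mathbf{1}_n\otimes\mathbf{b}$, which is condition (iii). Collecting the three equivalences yields both directions of the stated iff: (ii) rules out diverging or oscillating modes, (i) forces the zero mode not to grow polynomially so that the limit exists, and (iii) forces that limit to be a genuine consensus state. For necessity I would exhibit, whenever any single condition fails, an initial state $\mathbf{x}(0)$ for which consensus is violated (an eigenvector of an unstable or purely imaginary mode, a generalized eigenvector of a size-$2$ block at $0$, or a kernel vector outside $\mathcal{C}$). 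The step I expect to be the main obstacle is the rigorous treatment of the marginal ($\sigma_i=0$) subspace: proving the equivalence of $\mathrm{rank}(M)=\mathrm{rank}(M^2)$ with semisimplicity and carefully identifying $\lim_{t\to\infty}e^{Mt}$ as the spectral projection onto $\mathrm{Ker}(M)$, since this is where polynomial-in-$t$ divergence must be excluded and where the three conditions intertwine.
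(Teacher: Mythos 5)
Your argument is correct, but note that the paper itself offers no proof of this lemma: it is imported verbatim from the cited reference \cite{xiao2007consensus}, so there is no in-paper proof to compare against. Your route --- fixing the definition of consensus as convergence of $e^{Mt}\mathbf{x}(0)$ to the subspace $\{\mathbf{1}_n\otimes\mathbf{b}\}$, reducing to existence of $P=\lim_{t\to\infty}e^{Mt}$ with $\mathrm{Range}(P)\subseteq\mathcal{C}$, reading off condition (ii) from the Jordan blocks, identifying $\mathrm{rank}(M)=\mathrm{rank}(M^2)$ with semisimplicity of the zero eigenvalue via the stabilization of $\mathrm{rank}(M^k)$, and then recognizing $P$ as the spectral projection onto $\mathrm{Ker}(M)$ so that (iii) becomes $\mathrm{Ker}(M)\subseteq\mathcal{C}$ --- is exactly the standard proof and matches the one in the cited source. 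The only points deserving a little extra care in a full write-up are (a) making the chosen definition of ``solves a consensus problem'' explicit, since with the weaker requirement $\|\mathbf{x}_i(t)-\mathbf{x}_j(t)\|\to 0$ a purely imaginary mode lying inside $\mathcal{C}$ would not violate consensus and the stated conditions would no longer be necessary, and (b) taking real and imaginary parts of complex (generalized) eigenvectors when constructing the counterexample initial states in the necessity direction; neither affects the validity of your plan.
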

\begin{remark}
	Under Assumption \ref{ass1}, if a system is not controllable, the consensus protocol (\ref{2}) cannot control the initial states $\mathbf{x}(0)$ to any final states $\mathbf{x}(t)$ in a period. Therefore, the controllability of systems is necessary to guarantee the effectiveness of the consensus protocol. In addition, the system is Hurwitz stable, i.e., all eigenvalues have the negative real part, which satisfies the consensus conditions in Lemma \ref{lemma1}.
	%		Lemma \ref{lemma1} gives the conditions of achieving consensus without attacks.
\end{remark}

\subsection{Adversary Model}
In this part, we explore the impact of multiple compromised agents on system performance with limited cost. Consider the scenario that the adversary desires to compromise a subset of agents and injects false data in the update process of the agents to maximize convergence error, shown in Fig. \ref{attack_scenario}. The adversary needs certain cost to compromise the agent, i.e., attack cost $c(i)\in \mathbb{R}_{\geq 0}$ for each agent $i\in \mathcal{V}$. In addition, the attack cost can be quite different in many real-world networks and the performance of attack strategies is sensitive to the total attack cost \cite{hong2013effect}. 
\begin{figure}[t]
	\centering
	\includegraphics[width=0.45\textwidth]{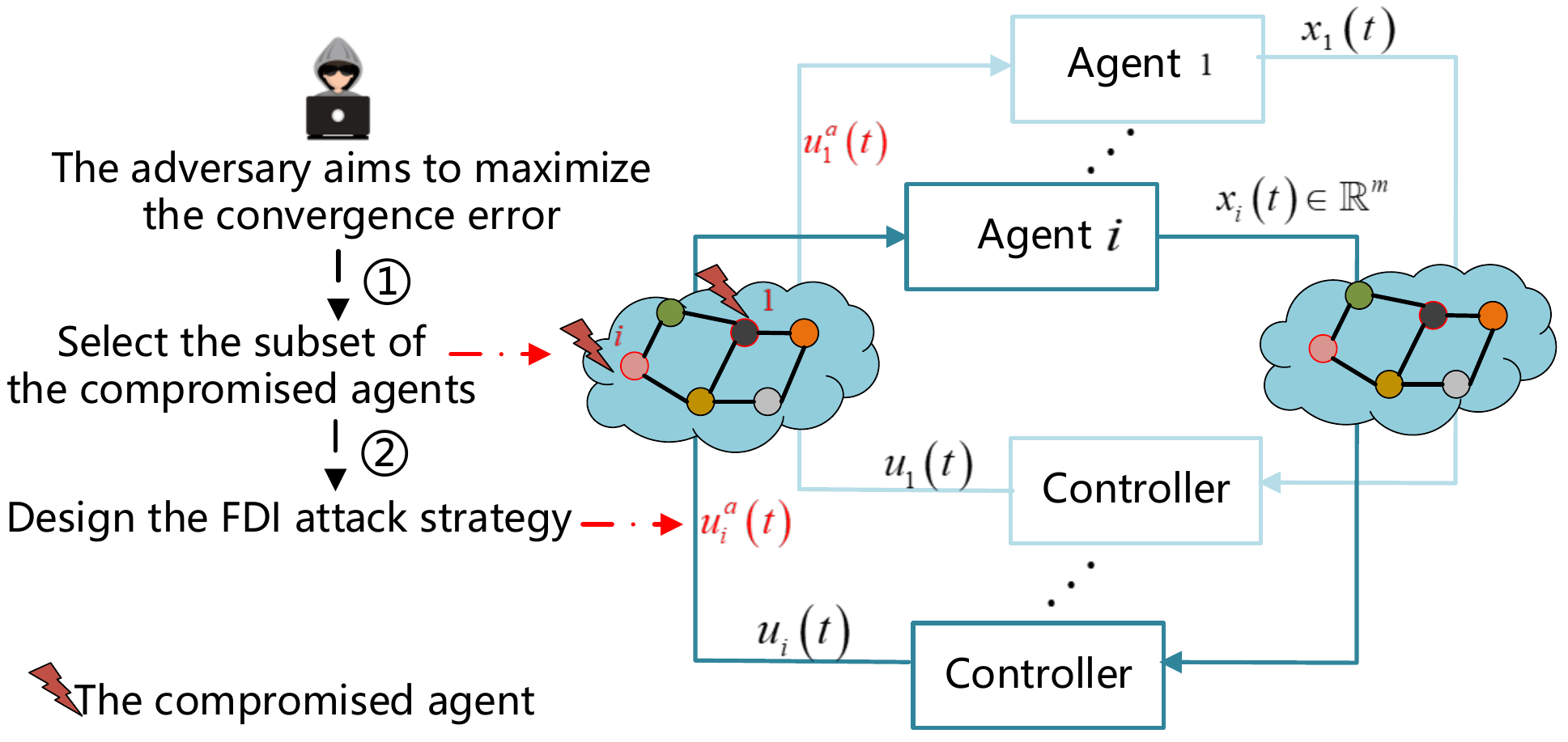}
	\caption{The FDI attack scenario in continuous-time system }
	\label{attack_scenario}
	\vspace*{-10pt}
\end{figure}
Next, we divide the attack process into the following two parts. \\
$\bullet$ \textbf{The selection of the subset of  the compromised agents}\\
We define $\mathcal{A}$ as the subset of the compromised agents. If agent $i$ is compromised, then $i\in \mathcal{A}\subseteq \mathcal{V}$. We define a vector  $\mathbf{\mu}^{\mathcal{A}} \in\{0,1\}^{n}$ to be the indicator vector of the compromised agents, i.e., $\mathbf{\mu}^{\mathcal{A}}=[\mu_1^{\mathcal{A}},\cdots,\mu_n^{\mathcal{A}}]^{\mathrm{T}}$.
The element $\mu_i^{\mathcal{A}} =1$ if and only if agent $i$ is compromised, i.e., 
\begin{numcases}{\mu_i^{\mathcal{A}}:=}\label{8}
\begin{aligned}
&1, ~\mathrm{if} ~i\in \mathcal{A},\\
&0, ~\mathrm{if} ~i\notin \mathcal{A}.
\end{aligned}
\end{numcases}
$\bullet$ \textbf{The design of attack strategy of the FDI attack}\\
For any selected subset $\mathcal{A}$ of the compromised agents, agent $i$ will update its state as
\begin{equation}\label{5}
\dot{\mathbf{x}}_i(t)=A \mathbf{x}_i(t)+ B \mathbf{u}_i(t)+ \mathbf{u}_i^{a}(t), ~i\in \mathcal{V},
\end{equation}
where $\mathbf{u}_i^{a}(t)$ is the false data injected by the adversary. Since the agents are compromised by an adversary at the same time,  
		%	If the adversary can infiltrate the system' communication network, it is easier to access and manipulate the update process of the agents. 
		the false data is designed as follows.
		\begin{numcases}{\mathbf{u}_i^{a}(t)=}\label{6}
		\begin{aligned}
		&\mathbf{\theta}(t), ~\forall i \in \mathcal{A},\\
		&\mathbf{0}, ~~~~\forall i \notin \mathcal{A},
		\end{aligned}
		\end{numcases}
		and $\mathbf{\theta}(t)\in \mathbb{R}^{m}$ is the concrete attack strategy. Additionally, the dynamical interaction between agents with $m$-dimensional states has greatly expanded the attack space.
		
		Then, we make the following assumptions on the ability of the adversary.
		\begin{assumption}\label{ass2}
			The total attack cost of the adversary has an upper bound $\Omega \in \mathbb{R}_{\geq 0}$, i.e., $\sum_{i\in \mathcal{A}} c(i) \leq \Omega$.
			%		which is the sum of attack cost $c(i)$ required for attacking one agent $i \in \mathcal{V}$. 
		\end{assumption}
		
		\begin{assumption}\label{ass4}
			The adversary can inject false data without being detected, and the false data is bounded.
		\end{assumption}
		
		\begin{assumption}\label{ass5}
			The attack strategy of the adversary can be divided into two categories, including time-invariant and time-variant attack strategy. Both attack strategies are deterministic.
		\end{assumption}
		\begin{remark}
			%	There are various resource constraints on agents, such as bandwidth constraints, energy constraints, and topological constraints \cite{weerakkody2019resilient}. For bandwidth constraints, a maximum of $p$ out of a total of $n$ agents can communicate to a fusion center simultaneously. For energy constraints, the adversary cannot tamper with states of all agents in limited energy cost. For topological constraints, only the immediate neighbors can transmit the information. 
			Assumption \ref{ass2} gives a reasonable limit on the capability of the adversary.
			In almost all types of attacks, energy cost is inherent \cite{zhang2015optimal}, which will influence the attack strategy. In addition to the energy cost, compromising an agent needs other attack cost and the adversary cannot compromise agents arbitrarily. For example, in smart grids, some meters are under physical protection and the adversary is constrained to some specific meters \cite{liu2011false}. In Assumption \ref{ass4}, when the adversary enables access to the system environment and obtains the configuration information of dynamical systems, it is not difficult to inject false data, like Stuxnet and Ukraine attacks. Especially, motivated by the work \cite{guo2018worst} where
			%		a general attack strategy is defined as the subset of all linear attack strategies and 
			the Kullback-Leibler divergence as a stealthiness metric requires no more than a threshold, it is reasonable that the injected false data is bounded. 
			In Assumption \ref{ass5}, the time-invariant attack strategy satisfies $\mathbf{\theta}(t)=\mathbf{\theta}$ where the injected false data $\mathbf{\theta}$ is constant and does not vary with the time. Instead, under time-variant attack strategy, the injected false data $\mathbf{\theta}(t)$ is continuous integrable and varies over the time. 
\end{remark}

\subsection{Problem Formulation}
The dynamics of the compromised agents are
\begin{equation}\label{9}
\dot{\mathbf{x}}(\mathbf{\mu}^{\mathcal{A}},t)=M\mathbf{x}(t)+\mathbf{\mu}^{\mathcal{A}}\otimes \mathbf{\theta}(t).
\end{equation}
Then, we derive 
\begin{equation}\label{10}
\mathbf{x}(\mathbf{\mu}^{\mathcal{A}},t)=e^{Mt}\mathbf{x}(0)+\int_{0}^{t}e^{M(t-\tau)}\mathbf{\mu}^{\mathcal{A}}\otimes \mathbf{\theta}(\tau)d\tau.
\end{equation}

Our objective is to select the compromised subset $\mathcal{A}$ such that the convergence error is maximized while the limited total attack cost is considered. We define
\begin{align}\label{add10}
f(\mathcal{A}) \triangleq \lim_{t\to t_c}\|\mathbf{x}(\mathbf{\mu}^{\mathcal{A}},t)-\mathbf{x}(t)\|
\end{align}
as the convergence error, and quantify the effects of the attack set selection strategy on convergence error in a period, where $t_c$ is greater than the convergence time without attacks since the states after that time will converge to a common value.
Then, the FDI attack design problem is formulated as follows. 
\begin{align}\label{11}
\mathbf{\mathcal{P}_1}: \quad &\mathrm{max}_{\mathcal{A}\subseteq \mathcal{V}} ~f(\mathcal{A})\\
&\mathrm{s.t.}~  \|\mathbf{u}_i^{a}\|\leq \bar{u},  ~\forall i\in \mathcal{V},\nonumber\\
&\quad ~~ \sum_{i\in \mathcal{A}}c(i) \leq \Omega, \nonumber\\
&\quad ~ \int_{0}^{t_c}\mathbf{\theta}(\tau) d\tau\leq \bar{g}, ~\forall i\in \mathcal{V},\nonumber
\end{align}
where $\bar{u}$ is the upper bound of false data, and $\bar{g}$ is the upper bound of attack energy. The problem $\mathcal{P}_1$ is a combinatorial optimization problem.

\section{Time-invariant attack set selection}\label{III}
In this section, we investigate the case with time-invariant attacks by using the submodularity optimization theory [see Section III.3. in \cite{wolsey1999integer}]. First, we show the problem $\mathcal{P}_1$ is NP-hard. Then, we derive the conditions that the convergence error in problem $\mathcal{P}_1$ is submodular. Finally, we design an FDI-ASSA to solve it and show the analytical gap between the solution and the optimal one under FDI-ASSA.

\subsection{NP-hardness of Problem $\mathcal{P}_1$}
First, we show the complexity of the problem $\mathcal{P}_1$ under time-invariant attacks.	
\begin{lemma}\label{NPhard}
	\textbf{(Complexity)}.
	The problem $\mathcal{P}_1$ under the time-invariant attack strategy $\mathbf{\theta}(t)=\mathbf{\theta}$ is NP-hard.
\end{lemma}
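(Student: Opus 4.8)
The plan is to establish NP-hardness by a polynomial-time reduction from the 0-1 Knapsack problem, which is the natural source because $\mathcal{P}_1$ already carries a knapsack-type budget $\sum_{i\in\mathcal{A}}c(i)\le\Omega$ with arbitrary costs. Given a Knapsack instance with items $1,\dots,n$, values $p_i\ge 0$, weights $w_i$, and capacity $W$, I would build an instance of $\mathcal{P}_1$ on $n$ agents by setting $c(i)=w_i$ and $\Omega=W$, and then choosing the remaining data ($\mathcal{G}$, $A$, $B$, $\bar d$, and the attack direction $\theta$) so that the convergence error becomes the Knapsack objective. The goal is to show that deciding whether some feasible $\mathcal{A}$ attains $f(\mathcal{A})\ge\gamma$ is equivalent to deciding whether some subset of total weight at most $W$ has total value at least $V$, for corresponding $\gamma$ and $V$.

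The first step is to reduce $f$ to a workable form. Under the time-invariant strategy $\theta(t)=\theta$ and Assumption~\ref{ass1} (so that $M$ is Hurwitz), substituting $s=t-\tau$ in (\ref{10}) gives $\mathbf{x}(\mu^{\mathcal{A}},t)-\mathbf{x}(t)=\big(\int_0^t e^{Ms}\,ds\big)(\mu^{\mathcal{A}}\otimes\theta)$, and since $\int_0^t e^{Ms}\,ds\to -M^{-1}$ once $t$ exceeds the convergence horizon $t_c$, the error is the forced-equilibrium deviation
\begin{align}\label{reduce-f}
f(\mathcal{A})=\Big\|\sum_{i\in\mathcal{A}}h_i\Big\|,\qquad h_i:=-M^{-1}(e_i\otimes\theta).
\end{align}
Thus $f(\mathcal{A})^2=\sum_{i,j\in\mathcal{A}}h_i^{\mathrm{T}}h_j$ is a positive-semidefinite quadratic set function determined by the fixed per-agent response vectors $\{h_i\}$.

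The second step is the encoding: I would select $\mathcal{G},A,B,\bar d,\theta$ so that the $h_i$ are mutually orthogonal with $\|h_i\|^2=p_i$. Then (\ref{reduce-f}) collapses to the modular form $f(\mathcal{A})^2=\sum_{i\in\mathcal{A}}p_i$, so maximizing $f$ over the budget is literally 0-1 Knapsack; taking $\gamma=\sqrt{V}$ makes the two decision problems coincide, and a polynomial-time algorithm for $\mathcal{P}_1$ would solve Knapsack. It remains only to verify that the constructed instance satisfies Assumptions~\ref{ass1}--\ref{ass5} (controllability, strict Hurwitz stability, and boundedness of $\theta$), which is routine once the parameters are fixed.

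The hard part is exactly the orthogonalization in the second step. Because the agents are homogeneous, the only source of dissimilarity among the $h_i$ is the network coupling through $L$; but that same coupling correlates them, and for a symmetric $L$ with dynamics that commute with $L\otimes I$ the Gram matrix $[h_i^{\mathrm{T}}h_j]$ is a function of $L$ and cannot be made nontrivially diagonal. I would break this obstruction by using multi-dimensional states $m\ge 2$ with a non-symmetric $B$, so that $M$ no longer commutes with $L\otimes I$, and then argue by dimension counting and genericity that the $n$ vectors $\{-M^{-1}(e_i\otimes\theta)\}$ can be placed in orthogonal position with prescribed norms; the multi-dimensional interaction emphasized after (\ref{6}) is precisely what supplies the needed degrees of freedom. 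Should exact orthogonality prove awkward to certify, the fallback is to keep the quadratic form $f(\mathcal{A})^2=\sum_{i,j\in\mathcal{A}}h_i^{\mathrm{T}}h_j$ and reduce instead from the quadratic knapsack problem, which is NP-hard already for nonnegative positive-semidefinite weight matrices.
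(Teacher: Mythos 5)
Your overall route is the same as the paper's: a reduction from 0-1 knapsack in which the costs $c(i)$ play the role of the weights, the budget $\Omega$ is the capacity, and the dynamics are chosen so that the per-agent contributions to the convergence error decouple, making $f(\mathcal{A})^2$ modular. Your closed form $f(\mathcal{A})=\|\sum_{i\in\mathcal{A}}h_i\|$ with $h_i=-M^{-1}(e_i\otimes\theta)$ is the equilibrium version of the paper's $\kappa(\mathcal{A},t)=-(\frac{1}{a}I-\frac{e^{aIt}}{aI})(\mu^{\mathcal{A}}\otimes\theta)$. Where you diverge is in how you obtain the decoupling: the paper simply takes $M=aI$ with $a<0$, so each $h_i$ is supported on the $i$-th block of $m$ coordinates and the pairwise orthogonality you labor over comes for free; your plan to manufacture orthogonality through a non-symmetric $B$ and a genericity argument is much harder than necessary.

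The genuine gap is the second half of your encoding: realizing the prescribed norms $\|h_i\|^2=p_i$. This is not ``routine.'' The model forces all agents to share $A$, $B$, $\bar d$, and the single attack vector $\theta$, and $L$ is the Laplacian of an undirected graph; the only way to give agent $i$ a different response magnitude from agent $j$ is through asymmetries of the graph, and any graph automorphism swapping $i$ and $j$ forces $\|h_i\|=\|h_j\|$. A dimension count does not establish that the map from $(\mathcal{G},A,B,\bar d,\theta)$ to the Gram matrix $[h_i^{\mathrm{T}}h_j]$ hits the required diagonal targets, and a genericity argument would not by itself yield a polynomial-time reduction producing polynomially bounded rational data. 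Your fallback to quadratic knapsack inherits the same realizability question, since you would still need to show that an NP-hard subfamily of quadratic forms is realizable within this constrained parametrization. (To be fair, the paper's own proof stumbles at the identical point: with $M=-I$ it sets $\|\theta\|\propto\alpha_i$, which is inconsistent because $\theta$ does not depend on $i$; with a shared $\theta$ and $M=aI$ every item receives the same value, and equal-value knapsack is solvable in polynomial time.) You have correctly located the crux, but neither the genericity sketch nor the fallback discharges it, so the argument as proposed is incomplete.
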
	
\begin{proof}
	%	The proof is completed in Appendix \ref{A}.
	We give a reduction from the problem $\mathcal{P}_1$ to the $0$-$1$ knapsack problem \cite{pisinger1998knapsack}. For the ease of notion, in this part we drop the subscript $mn \times mn$ from $I_{mn \times mn}$. 
	
	First, we find the reduction of problem $\mathcal{P}_1$ under time-invariant attacks. Let $\kappa(\mathcal{A},t)=\mathbf{x}(\mathbf{\mu}^{\mathcal{A}},t)-\mathbf{x}(t)$. When $M=aI$ with $a<0$ being some constant and $\mathbf{\theta}=[\theta_1, \cdots, \theta_m]^{\mathrm{T}}$ with $\theta_i \in \mathbb{R}_{\geq 0}$, we know 
	\begin{align*}
	\kappa(\mathcal{A},t)=-(\frac{1}{a}I -\frac{e^{aIt}}{aI})(\mathbf{\mu}^{\mathcal{A}}\otimes \mathbf{\theta}).
	\end{align*}
	Then, it follows that its $i$-th segmented vector
	\begin{align*}
	\kappa(\mathcal{A},t)_i =\mathrm{diag}\{-\frac{1}{a}-\frac{e^{at}}{a}\}_{m\times m} \mu_i^{\mathcal{A}} \mathbf{\theta} 
	\end{align*}
	if $\mu_i^{\mathcal{A}}=1$ and $\kappa(\mathcal{A},t)_i =0$ if $\mu_i^{\mathcal{A}}=0$. Since
	{\small{
	\begin{align*}
	\|\kappa(\mathcal{A},t)_i\|(\mu_i^{\mathcal{A}}=1)-\|\kappa(\mathcal{A},t)_i\|(\mu_i^{\mathcal{A}}=0)= \|\kappa(\mathcal{A},t)_i\|,
	\end{align*}}}the reduction of problem $\mathcal{P}_1$ by adding attack agent $i$ is $\|\kappa(\mathcal{A},t)_i\|$ for any $i$.

	Then, we find the corresponding instance of problem $\mathcal{P}_1$ for $0$-$1$ knapsack problem.
	Given the number of items $n$, the set of values $\{\alpha_i\}$, the set of weights $\{\beta_i\}$ and the weight budget $\beta$ for the $0$-$1$ knapsack problem, the corresponding instance of problem $\mathcal{P}_1$ under time-invariant attacks is the stable system matrix $M=-I$ and the attack strategy $\mathbf{\theta}=[\theta_1, \cdots, \theta_m]^{\mathrm{T}}$ with $ \|\theta\| =(-\frac{a}{1+e^{at}}) \alpha_i=(\frac{1}{1+e^{-t}}) \alpha_i $, based on $\|\kappa(\mathcal{A},t)_i\|= (-\frac{1}{a}-\frac{e^{at}}{a}) \|\theta\|$. Then, we can see that the selected subset $\mathcal{A}$ for $0$-$1$ knapsack problem is optimal if and only if it is optimal for the corresponding problem $\mathcal{P}_1$. Since the optimization form of  $0$-$1$ knapsack problem is NP-hard, the problem $\mathcal{P}_1$ is NP-hard.
\end{proof}

\begin{remark}
	In Lemma \ref{NPhard}, we reduce the proposed problem $\mathcal{P}_1$ under time-invariant attacks to the $0$-$1$ knapsack problem that has been shown to be NP-hard, where the proof is similar to Lemma $2$ in \cite{zhang2017sensor}. Lemma \ref{NPhard} reveals that it is hard to find an optimal solution for problem $\mathcal{P}_1$ in polynomial time.
\end{remark}

\subsection{Submodular Conditions under Time-invariant Attacks}
We introduce the following definitions, which are basic and important in the submodularity optimization approach. Recall that $\mathcal{V}$ is the set of all agents. A set function $f$ over $\mathcal{V}$ assigns a real value to every subset of $\mathcal{V}$, i.e., $f:2^{\mathcal{V}} \rightarrow \mathbb{R}$.
\begin{definition}
	\textbf{(Normalized and Monotone \cite{jawaid2015submodularity})}. The function $f$ is normalized if $f({\emptyset})=0$. The function $f$ is monotone non-decreasing if for all $\mathcal{A}\subseteq \mathcal{B}\subseteq \mathcal{V}$, it holds $f(\mathcal{A})\leq f(\mathcal{B})$. 
\end{definition}
\begin{definition}
	\textbf{(Submodularity \cite{jawaid2015submodularity})}. The function $f$ is submodular if all $\mathcal{A}\subseteq \mathcal{B} \subseteq \mathcal{V}$ and for all $j\in \mathcal{V} \backslash \mathcal{B}$, it holds $f(\mathcal{A} \cup \{j\})-f(\mathcal{A})\geq f(\mathcal{B} \cup \{j\})-f(\mathcal{B})$. 
\end{definition}

The submodular function has the property of diminishing marginal returns \cite{jawaid2015submodularity}. It means that the benefit of adding a new element to the total set decreases as the total set becomes larger. When the benefit is close to zero, the set reaches the saturation point. 

With the property of the submodular function, we transform the convergence error in (\ref{11}) and give its equivalent expression as follows.

\begin{lemma}\label{objective}
	\textbf{(Equivalence)}.
	Under (\ref{8}) with time-invariant attack strategy $\theta$, the convergence error of system (\ref{9}) satisfies
	\begin{equation}\label{add13}
	f(\mathcal{A})=\|(U^{\mathrm{T} }\otimes I_m) \hat{M} (U \mathbf{\mu}^\mathcal{A} \otimes \mathbf{\theta})\|
	\end{equation}
	where 
	\begin{align*}
	\hat{M}=\mathrm{diag}\{\varphi(A),\varphi(A-\lambda_2 \bar{d} B), \cdots, \varphi(A-\lambda_n \bar{d} B)\}
	\end{align*}with $\varphi(A) \triangleq \int_{0}^{t_c}e^{A(t-\tau)}\text{d}\tau$.
\end{lemma}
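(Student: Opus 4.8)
The plan is to start from the closed-form trajectory (\ref{10}) and isolate the attack-induced deviation. Subtracting the unperturbed solution $\mathbf{x}(t)=e^{Mt}\mathbf{x}(0)$ of (\ref{4}) from (\ref{10}) cancels the initial-condition term, leaving $\mathbf{x}(\mathbf{\mu}^{\mathcal{A}},t)-\mathbf{x}(t)=\int_{0}^{t}e^{M(t-\tau)}(\mathbf{\mu}^{\mathcal{A}}\otimes\mathbf{\theta}(\tau))\,d\tau$. Under the time-invariant strategy $\mathbf{\theta}(\tau)=\mathbf{\theta}$ of Assumption \ref{ass5}, the factor $\mathbf{\mu}^{\mathcal{A}}\otimes\mathbf{\theta}$ is constant and pulls out of the integral, so evaluating $t\to t_c$ reduces the entire problem to computing the single matrix $\Phi\triangleq\int_{0}^{t_c}e^{M(t_c-\tau)}\,d\tau$ acting on $\mathbf{\mu}^{\mathcal{A}}\otimes\mathbf{\theta}$.

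First I would diagonalize $M$ through its Kronecker structure. Since $ULU^{-1}=J=\mathrm{diag}\{\lambda_1,\dots,\lambda_n\}$, the mixed-product property gives $(U\otimes I_m)M(U^{-1}\otimes I_m)=I_n\otimes A-\bar{d}\,J\otimes B$, which is block diagonal with $i$-th block $A-\bar{d}\lambda_i B$; because the Laplacian of a connected undirected graph has $\lambda_1=0$, the leading block is exactly $A$. Writing $M=(U^{-1}\otimes I_m)\tilde{M}(U\otimes I_m)$ with $\tilde{M}$ this block-diagonal matrix, the similarity passes through the exponential, $e^{Ms}=(U^{-1}\otimes I_m)e^{\tilde{M}s}(U\otimes I_m)$, and commutes with the integral. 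As $e^{\tilde{M}s}$ is block diagonal, integrating block by block yields precisely $\hat{M}=\mathrm{diag}\{\varphi(A),\varphi(A-\lambda_2\bar{d}B),\dots,\varphi(A-\lambda_n\bar{d}B)\}$, so $\Phi=(U^{-1}\otimes I_m)\hat{M}(U\otimes I_m)$. Applying $\Phi$ to $\mathbf{\mu}^{\mathcal{A}}\otimes\mathbf{\theta}$ and using the mixed-product identity once more, $(U\otimes I_m)(\mathbf{\mu}^{\mathcal{A}}\otimes\mathbf{\theta})=(U\mathbf{\mu}^{\mathcal{A}})\otimes\mathbf{\theta}$, gives $\mathbf{x}(\mathbf{\mu}^{\mathcal{A}},t_c)-\mathbf{x}(t_c)=(U^{-1}\otimes I_m)\hat{M}(U\mathbf{\mu}^{\mathcal{A}}\otimes\mathbf{\theta})$, so that after taking the $l_2$-norm we obtain the claimed formula with $U^{-1}$ in the place of $U^{\mathrm{T}}$.

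The key remaining point, and the step I expect to be the main obstacle to state cleanly, is the replacement of $U^{-1}$ by $U^{\mathrm{T}}$. This relies on the graph being undirected, so that $L$ is real symmetric and its eigenvector matrix can be chosen orthonormal, i.e. $U^{-1}=U^{\mathrm{T}}$ with real $\lambda_i$; this is where the symmetry/connectivity hypotheses of the network model genuinely enter. Two further details must be checked: that $\lambda_1=0$ so the leading block reduces to $A$ (hence the entry $\varphi(A)$ in $\hat{M}$), and that the matrix exponential commutes with both the similarity and the time integral, which are routine since $M$ is constant. Note that since $U^{\mathrm{T}}\otimes I_m$ is then orthogonal it even preserves the norm; I would nonetheless keep it explicit in the expression, as its structure is what the subsequent submodularity analysis exploits.
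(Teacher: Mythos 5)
Your proof is correct and follows essentially the same route as the paper: both block-diagonalize $M$ through $U\otimes I_m$ using $L=UJU^{\mathrm{T}}$, identify $\hat{M}$ as the blockwise integral of the matrix exponential with leading block $\varphi(A)$ from $\lambda_1=0$, and invoke $U^{-1}=U^{\mathrm{T}}$ from the symmetry of the undirected Laplacian. Your ordering---subtracting the unperturbed trajectory \emph{before} diagonalizing---is in fact slightly cleaner than the paper's derivation via the transformed state $\mathbf{z}$, since the homogeneous terms cancel exactly and no exponentially vanishing remainder $\mathbf{r}(t_c)$ is needed, which shows the expression for $\kappa(\mathcal{A},t_c)$ is exact rather than an approximation.
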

\begin{proof}
	Let $\mathbf{z}(\mathbf{\mu}^{\mathcal{A}},t)=(U\otimes I_m)\mathbf{x}(\mathbf{\mu}^{\mathcal{A}},t)$. Combining it with (\ref{9}), we have   
	\begin{align*}
	\dot{\mathbf{z}}(\mathbf{\mu}^{\mathcal{A}},t)=&(U\otimes I_m)(I_n\otimes A-\bar{d} L\otimes B)(U^{\mathrm{T}}\otimes I_m)\mathbf{z}(\mathbf{\mu}^{\mathcal{A}},t) \\
	&+(U\otimes I_m)(\mathbf{\mu}^{\mathcal{A}}\otimes \mathbf{\mathbf{\theta}}(t)) \nonumber\\
	=&(U I_n U^{\mathrm{T}} \otimes I_m A I_m)\\
	&-(U \bar{d}L U^{\mathrm{T}} \otimes I_m B I_m) \mathbf{z}(\mathbf{\mu}^{\mathcal{A}},t)\\
	&+(U \mathbf{\mu}^{\mathcal{A}}\otimes I_m \mathbf{\mathbf{\theta}}(t)) \nonumber \\
	=& (I_n \otimes A - \bar{d} J \otimes B)\mathbf{z}(\mathbf{\mu}^{\mathcal{A}},t)\\
	&+ (\sum_{i\in \mathcal{A}}U_i \otimes \mathbf{\mathbf{\theta}}(t)),
	\end{align*}
	where $L=U J U^{\mathrm{T}}$ with $U=[U_1,U_2,\cdots,U_n]\in \mathbb{R}^{n \times n}$.
	
	Next, recall that the consensus condition without attacks in Lemma \ref{lemma1}. We assume that only the $n-1$ matrices $A-\lambda_i \bar{d} B, i\in \{2,\cdots,n\}$ are Hurwitz and there exists a vector-valued function $\mathbf{r}: \mathbb{R}_{\geq 0} \rightarrow \mathbb{R}^{m}$ that is exponentially vanishing as $t\rightarrow t_c$ and such that 
	\begin{align*}
	\mathbf{z}(\mathbf{\mu}^{\mathcal{A}},t_c)
	%		 & \mathrm{diag}\{e^{At},0_{m\times m}, \cdots, 0_{m\times m}\}
	%		\mathbf{z}(0)+\mathbf{r}(t) \\
	=&((\mathbf{e}_1 \mathbf{e}_1^{\mathrm{T}})\otimes e^{At_c})\mathbf{z}(0)+\mathbf{r}(t_c) 
	+\hat{M} (U \mathbf{\mu}^\mathcal{A} \otimes \mathbf{\mathbf{\mathbf{\theta}}})
	\end{align*} with
	\begin{align*}
	\hat{M}=\mathrm{diag}\{\varphi(A),\varphi(A-\lambda_2 \bar{d} B), \cdots, \varphi(A-\lambda_n \bar{d} B)\}.
	\end{align*}
	Then, let $\kappa(\mathcal{A},t)=\mathbf{x}(\mathbf{\mu}^{\mathcal{A}},t)-\mathbf{x}(t)$ and $\mathbf{x}(\mathbf{\mu}^{\mathcal{A}},t)$ satisfies   
	\begin{align*}
	\lim_{t\rightarrow t_c} \mathbf{x}(\mathbf{\mu}^{\mathcal{A}},t)=&(U^{\mathrm{T}}\otimes I_m) \lim_{t\rightarrow t_c} \mathbf{z}(\mathbf{\mu}^{\mathcal{A}},t)\\
	=& (U^{\mathrm{T} }\otimes I_m)[(\mathbf{e_1} \mathbf{e_1}^{\mathrm{T}}) \otimes e^{At_c}] (U \otimes I_m) \mathbf{x}(0)\\
	& + \lim_{t\rightarrow t_c} \kappa(\mathcal{A},t).
	\end{align*}
	where
	\begin{align*}
	\lim_{t\rightarrow t_c} \kappa(\mathcal{A},t)=(U^{\mathrm{T} }\otimes I_m) \hat{M} (U \mathbf{\mu}^\mathcal{A} \otimes \mathbf{\mathbf{\theta}}).
	\end{align*} 
	Therefore, the proof is completed.
\end{proof}

Lemma \ref{objective} provides the equivalent convergence error expression (\ref{add13}), which is beneficial for analyzing the submodular condition later. Note that $\|(U^{\mathrm{T} }\otimes I_m) \hat{M} (U \mathbf{\mu}^\mathcal{A} \otimes \mathbf{\mathbf{\theta}})\|$ is the approximation of convergence error $f(\mathcal{A})$ since $\mathbf{r}(t_c)$ is not equivalent to $\mathbf{0}$ when $t\to t_c$. For each agent, due to the same tiny deviation between the elements in $\mathbf{r}(t_c)$ and $0$, the deviation will not influence the submodularity of the convergence error. Thus, 
we can replace $f(\mathcal{A})$ with $\|(U^{\mathrm{T} }\otimes I_m) \hat{M} (U \mathbf{\mu}^\mathcal{A} \otimes \mathbf{\mathbf{\theta}})\|$ in some extent. As shown in the following theorem, we provide the necessary and sufficient conditions to guarantee the submodularity of (\ref{add13}).

\begin{theorem}\label{firstsubmodular}
		\textbf{(Submodular conditions)}.
	Under Assumptions $1$-$3$ and time-invariant attack strategy $\mathbf{\theta}$, the convergence error $f(\mathcal{A})$ in problem $\mathcal{P}_1$ is normalized and monotone non-decreasing submodular iff
	\begin{align}\label{13}
	h(\mathcal{A} \cup \mathcal{B},\mathcal{B} \backslash \mathcal{A}) \geq 0
	\end{align}
	and 
	\begin{align}\label{add14}
	h(\mathcal{A},\{j\}) \geq \frac{1}{2}(\gamma-1) h(\{j\},\{j\}) + \gamma h(\mathcal{B},\{j\})
	\end{align}
	always hold where 
	\begin{align*}
	h(\mathcal{A},\mathcal{B}) \triangleq [(\sum_{i\in \mathcal{A}} U_i)^{\mathrm{T}} \otimes \mathbf{\theta}^{\mathrm{T}} ] \hat{M}^{\mathrm{T}} \hat{M} (\sum_{i \in \mathcal{B}} U_i \otimes \mathbf{\theta})
	\end{align*}
	with all $\mathcal{A} \subseteq \mathcal{B \subseteq \mathcal{V}}$, and $\gamma \in [0,1]$ is determined by the selected sets $\mathcal{A}$ and $\mathcal{B}$.
\end{theorem}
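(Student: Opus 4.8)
My plan is to strip the combinatorial objective down to pure Euclidean geometry and then attack the two characterizing inequalities separately. First I would invoke Lemma \ref{objective} together with the fact that, since $L$ is symmetric, the transforming matrix can be taken orthogonal (this is precisely the factorization $L=UJU^{\mathrm T}$, i.e. $UU^{\mathrm T}=I_n$, already used in that proof). Writing $g_i:=\hat M(U_i\otimes\mathbf{\theta})$ and $G_{\mathcal A}:=\sum_{i\in\mathcal A}g_i$, the isometry $(U^{\mathrm T}\otimes I_m)^{\mathrm T}(U^{\mathrm T}\otimes I_m)=(UU^{\mathrm T}\otimes I_m)=I$ collapses (\ref{add13}) to $f(\mathcal A)=\|G_{\mathcal A}\|$, while the definition of $h$ gives $h(\mathcal A,\mathcal B)=\langle G_{\mathcal A},G_{\mathcal B}\rangle$ and $f(\mathcal A)^2=h(\mathcal A,\mathcal A)$. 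Thus $h$ is a symmetric bilinear form that is additive over disjoint unions, and the whole theorem becomes a family of statements about the Euclidean norm of a subset sum of the fixed vectors $\{g_i\}$.

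Normalization is immediate since $G_{\emptyset}=\mathbf 0$. For monotonicity I would use that $t\mapsto\sqrt{t}$ is increasing, so $f(\mathcal A)\le f(\mathcal B)$ is equivalent to $h(\mathcal A,\mathcal A)\le h(\mathcal B,\mathcal B)$. The necessity of (\ref{13}) then drops out of Cauchy--Schwarz: for $\mathcal A\subseteq\mathcal B$ one has $\mathcal A\cup\mathcal B=\mathcal B$, and $h(\mathcal A\cup\mathcal B,\mathcal B\backslash\mathcal A)=\|G_{\mathcal B}\|^2-\langle G_{\mathcal A},G_{\mathcal B}\rangle\ge\|G_{\mathcal B}\|\,(\|G_{\mathcal B}\|-\|G_{\mathcal A}\|)\ge 0$. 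For the converse I would pass to single-element increments along a maximal chain $\mathcal A=\mathcal A_0\subset\cdots\subset\mathcal A_k=\mathcal B$ and track the quadratic increment $h(\mathcal A_{l},\mathcal A_{l})-h(\mathcal A_{l-1},\mathcal A_{l-1})=2\langle G_{\mathcal A_{l-1}},g_{j_l}\rangle+\|g_{j_l}\|^2$ against (\ref{13}); this bookkeeping, where the nonlinearity of the norm first bites, is the part that needs the most care.

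The heart of the argument is the submodular inequality, which compares the marginal gains $f(\mathcal A\cup\{j\})-f(\mathcal A)$ and $f(\mathcal B\cup\{j\})-f(\mathcal B)$. Because $f$ is a square root, these are differences of square roots and cannot be manipulated by the bilinear/PSD algebra directly; my device is to rationalize each one through $\sqrt{p}-\sqrt{q}=(p-q)/(\sqrt{p}+\sqrt{q})$. Writing $\mathcal N_{\mathcal A}=2h(\mathcal A,\{j\})+h(\{j\},\{j\})$ for the difference of squared norms and $\mathcal D_{\mathcal A}=\|G_{\mathcal A}+g_j\|+\|G_{\mathcal A}\|$ for the denominator, submodularity reads $\mathcal N_{\mathcal A}/\mathcal D_{\mathcal A}\ge \mathcal N_{\mathcal B}/\mathcal D_{\mathcal B}$. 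Setting $\gamma:=\mathcal D_{\mathcal A}/\mathcal D_{\mathcal B}$ and cross-multiplying turns this into $\mathcal N_{\mathcal A}\ge\gamma\,\mathcal N_{\mathcal B}$, which after halving and regrouping is exactly $h(\mathcal A,\{j\})\ge\frac12(\gamma-1)h(\{j\},\{j\})+\gamma h(\mathcal B,\{j\})$, i.e. (\ref{add14}). This simultaneously explains why the auxiliary $\gamma$ appears and why it is pinned down by the chosen sets $\mathcal A,\mathcal B$.

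The main obstacle, and the step I expect to spend the most effort on, is justifying $\gamma\in[0,1]$: this is not automatic, but follows from the monotonicity established above applied to both inclusions $\mathcal A\subseteq\mathcal B$ and $\mathcal A\cup\{j\}\subseteq\mathcal B\cup\{j\}$, which give $\|G_{\mathcal A}\|\le\|G_{\mathcal B}\|$ and $\|G_{\mathcal A}+g_j\|\le\|G_{\mathcal B}+g_j\|$, hence $\mathcal D_{\mathcal A}\le\mathcal D_{\mathcal B}$ (the degenerate case $\mathcal D_{\mathcal B}=0$ being trivial). Once $\gamma\in[0,1]$ is secured the cross-multiplication is an equivalence, so I would read the computation both ways: normalized, monotone and submodular together yield (\ref{13}) via Cauchy--Schwarz and (\ref{add14}) via the rationalized inequality, and conversely (\ref{13}) and (\ref{add14}) feed back through the same identities to recover all three properties. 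The genuinely delicate coupling is that the submodular reduction itself presupposes the monotonicity that legitimizes $\gamma\in[0,1]$, so the two conditions have to be closed up jointly rather than proved in isolation.
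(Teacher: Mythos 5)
Your proposal is essentially the paper's own argument. The reduction of $f(\mathcal{A})$ to the Euclidean norm of the subset sum $G_{\mathcal{A}}$ with $h(\mathcal{A},\mathcal{B})=\langle G_{\mathcal{A}},G_{\mathcal{B}}\rangle$, the rationalization $\sqrt{p}-\sqrt{q}=(p-q)/(\sqrt{p}+\sqrt{q})$ applied to the marginal gains, the identification $\gamma=\rho_j^{+}(\mathcal{A})/\rho_j^{+}(\mathcal{B})$, and the observation that $\gamma\in[0,1]$ is itself a consequence of the monotonicity being established are exactly the steps in the paper's proof (the paper writes $\rho_j(\mathcal{A})=(\rho_j^{+}(\mathcal{A}))^{-1}(h(\{j\},\{j\})+2h(\mathcal{A},\{j\}))$ and defines the same $\gamma$). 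Your Cauchy--Schwarz derivation of (\ref{13}) from monotonicity is a small improvement on the paper's necessity argument, which is only a loose contrapositive.

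One caution: the step you defer to ``bookkeeping along a chain'' --- deducing monotonicity from (\ref{13}) --- is precisely the step the paper asserts without proof, and it does not close as described. Since $\mathcal{A}\subseteq\mathcal{B}$ gives $\mathcal{A}\cup\mathcal{B}=\mathcal{B}$, condition (\ref{13}) reads $h(\mathcal{A},\mathcal{B}\backslash\mathcal{A})+h(\mathcal{B}\backslash\mathcal{A},\mathcal{B}\backslash\mathcal{A})\geq 0$, whereas $f(\mathcal{B})^2-f(\mathcal{A})^2=2h(\mathcal{A},\mathcal{B}\backslash\mathcal{A})+h(\mathcal{B}\backslash\mathcal{A},\mathcal{B}\backslash\mathcal{A})$; when the cross term is negative the first inequality does not imply the second (e.g.\ $G_{\mathcal{A}}=(1,0)$, $g$ for the added element equal to $(-1,\tfrac12)$ satisfies every instance of (\ref{13}) yet $\|G_{\mathcal{B}}\|<\|G_{\mathcal{A}}\|$), and the single-increment version $2h(\mathcal{A}_l,\{j_l\})\geq h(\{j_l\},\{j_l\})$ that your chain needs is strictly stronger than the $h(\mathcal{A}_l,\{j_l\})\geq 0$ that (\ref{13}) supplies. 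So you have faithfully reconstructed the paper's route, but the factor-of-two mismatch you would hit in that step is a real gap --- one the paper shares.
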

\begin{proof}
	The proof is divided into two parts. The first part \textit{(If)} is to prove the sufficiency of the submodular conditions. The second part \textit{(Only if)} is to show the submodular conditions' necessity.
	
	\textit{(If)} When (\ref{13}) is established, we have
	\begin{align*}
	h(\mathcal{B} \backslash \mathcal{A},\mathcal{B} \backslash \mathcal{A}) + 2 h(\mathcal{A},\mathcal{B} \backslash \mathcal{A}) \geq 0.
	\end{align*}
	Since
	{\small{
			\begin{align*}
			&f(\mathcal{A})=\{(U^{\mathrm{T} }\otimes I_m) \hat{M} (U \mathbf{\mu}^\mathcal{A} \otimes \mathbf{\mathbf{\theta}})^{\mathrm{T}} (U^{\mathrm{T} }\otimes I_m) \hat{M} (U \mathbf{\mu}^\mathcal{A} \otimes \mathbf{\mathbf{\theta}})\}^{\frac{1}{2}} \nonumber\\
			&= \{[(\sum_{i\in \mathcal{A}} U_i)^{\mathrm{T}} \otimes \mathbf{\mathbf{\mathbf{\theta}}}^{\mathrm{T}} ] \hat{M}^{\mathrm{T}} \hat{M} (\sum_{i \in \mathcal{A}} U_i \otimes \mathbf{\mathbf{\mathbf{\theta}}})\}^{\frac{1}{2}}\\
			&= h(\mathcal{A},\mathcal{A})^{\frac{1}{2}},
			\end{align*}}}
	and
	\begin{align*}
	&f(\mathcal{B})=h(\mathcal{B},\mathcal{B})^{\frac{1}{2}} \nonumber\\
	&=\{h(\mathcal{A},\mathcal{A})+h(\mathcal{B} \backslash \mathcal{A},\mathcal{B} \backslash \mathcal{A}) + 2 h(\mathcal{A},\mathcal{B} \backslash \mathcal{A})\}^{\frac{1}{2}},
	\end{align*} 
	it follows that $f(\mathcal{A}) \geq f(\mathcal{B})$. Hence, the monotonicity of the convergence error satisfies when (\ref{13}) holds. In addition, if $\mathcal{A}={\emptyset}$, we have $\mathbf{\mu}^{\mathcal{A}}=\mathbf{0}$. Thus, it is inferred that $f({\emptyset})=0$ and the convergence error is normalized.
	
	For the adding element $j\in \{a_{t+1},\cdots,a_{n}\}$, which satisfies $j\in\mathcal{V} \backslash \mathcal{B}$, the requirement of submodularity is
	\begin{align*}
	f(\mathcal{A} \cup \{j\})-f(\mathcal{A})\geq f(\mathcal{B} \cup \{j\})-f(\mathcal{B}).
	\end{align*} 
	This can be rewritten as
	{\small{	
	\begin{align}\label{14}
	\{h(\mathcal{A},\mathcal{A})+ h(\{j\},\{j\}) + 2h(\mathcal{A},\{j\})\}^{\frac{1}{2}} - h(\mathcal{A},\mathcal{A})^{\frac{1}{2}} \geq \nonumber \\
	\{h(\mathcal{B},\mathcal{B})+ h(\{j\},\{j\}) + 2h(\mathcal{B},\{j\})\}^{\frac{1}{2}} - h(\mathcal{B},\mathcal{B})^{\frac{1}{2}}.
	\end{align}}}
	Let $\rho_j(\mathcal{A}) \triangleq f(\mathcal{A} \cup \{j\})-f(\mathcal{A})$ and $\rho_j^{+}(\mathcal{A}) \triangleq f(\mathcal{A} \cup \{j\})+f(\mathcal{A})$. Taking the left hand side of the above inequality (\ref{14}) into consideration, we obtain 
	\begin{align*}
	\rho_j(\mathcal{A})=(\rho_j^{+}(\mathcal{A}))^{-1} (h(\{j\},\{j\}) + 2h(\mathcal{A},\{j\})).
	\end{align*}
	Similarly, for the right hand side of (\ref{14}), it is easy to infer
	\begin{align*}
	\rho_j(\mathcal{B})=(\rho_j^{+}(\mathcal{B}))^{-1} (h(\{j\},\{j\}) + 2h(\mathcal{B},\{j\})).
	\end{align*}		
	When (\ref{add14}) is established, we have
	\begin{align*}
	\rho_j(\mathcal{A}) \geq& (\rho_j^{+}(\mathcal{A}))^{-1} \gamma (h(\{j\},\{j\}) + 2h(\mathcal{B},\{j\})),
	\end{align*}
	for any $j\in \mathcal{V} \backslash \mathcal{B}$. Owing to the monotonicity of the convergence error, it follows that $\rho_j^{+}(\mathcal{B}) \geq \rho_j^{+}(\mathcal{A}) \gg 1$. We define
	\begin{align*}
	\gamma=\frac{(\rho_j^{+}(\mathcal{B}))^{-1}}{(\rho_j^{+}(\mathcal{A}))^{-1}},
	\end{align*}
	where $\gamma \in [0,1]$. Then, it is easy to obtain 
	\begin{align*}
	\rho_j(\mathcal{A}) \geq \rho_j(\mathcal{B})
	\end{align*}
	Thus, the submodularity is satisfied.
	
	\textit{(Only if)} 
	Suppose that (\ref{13}) and (\ref{add14}) are not established. Then we have 
	\begin{align*}
	h(\mathcal{B} \backslash \mathcal{A},\mathcal{B} \backslash \mathcal{A}) + 2 h(\mathcal{A},\mathcal{B} \backslash \mathcal{A}) \leq 0
	\end{align*}
	and
	\begin{align*}
	\rho_j(\mathcal{A}) \leq (\rho_j^{+}(\mathcal{A}))^{-1} \gamma (h(\{j\},\{j\}) + 2h(\mathcal{B},\{j\})). 
	\end{align*}
	Thus, $f(\mathcal{B}) \leq f(\mathcal{A})$ and $\rho_j(\mathcal{B}) \geq \rho_j(\mathcal{A})$ are established, which do not meet the monitonicity and submodularity.  
\end{proof}

Based on Theorem \ref{firstsubmodular}, we know the submodular conditions are associated with the attack strategy $\mathbf{\theta}$, Laplacian matrix $L$, and system dynamics matrices. Meanwhile, Theorem \ref{firstsubmodular} reveals the diminishing properties of the convergence error and provides the basis for solving the combinatorial optimization problem $\mathcal{P}_1$ under the time-invariant attack strategy. 
%Then, the idea of greedy algorithm is exploited to deal with the optimization problem (\ref{11}) \cite{fisher1978analysis,krause2005note,tzoumas2020lqg,ye2020complexity}.

To explain the submodular conditions intuitively, we take the following example to show the process of computing the submodular conditions.
\begin{example}
	Consider a system with six agents connected in a  linear graph and the dynamics of each agent satisfy $A=[-0.5~0;1~-1]$ and $B=[0.1~0.1;0.5~ 0.2]$. The injected $\mathbf{\theta}=[0.25;0.1]$ and the total attack cost $\Omega=2$. Next, we show the convergence error satisfies the submodular conditions, which are also validated through simulation results. When the selected attack set $\mathcal{A}=\mathcal{B}$, it is easy to obtain that $h(\mathcal{A} \cup \mathcal{B},\mathcal{B} \backslash \mathcal{A}) = 0$ and $\gamma=1$. Thus, (\ref{13}) and (\ref{add14}) are established. When the selected attack set $\mathcal{A} \subset \mathcal{B} \subseteq \mathcal{V}$, we have $h(\mathcal{A} \cup \mathcal{B},\mathcal{B} \backslash \mathcal{A}) = h(\mathcal{B},\mathcal{B} \backslash \mathcal{A})$. Assume the selected attack set $\mathcal{A}=\{1\}$ and $\mathcal{B}=\{1,3,4,5\}$. Then, (\ref{13}) is established since 
	\begin{equation*}
	\left\{
	          \begin{array}{lr}
	          h(\mathcal{B},\{3\})= 0.5008,\\ h(\mathcal{B},\{4\})= 0.3950,\\
	          h(\mathcal{B},\{5\})= 0.2846.
	          \end{array}
	\right.
	\end{equation*}
    For the adding agent $j \in \mathcal{V} \backslash \mathcal{B}=\{2,6\}$, we have
    \begin{equation*}
    \left\{
             \begin{array}{lr}
             h(\mathcal{A},\{2\})=0.1485,\\
             h(\mathcal{B},\{2\})=0.2622,\\
             h(\{2\},\{2\})=0.4052,\\
             \gamma=0.5841,
             \end{array}
    \right.
    \end{equation*}
    when $j=2$. It follows that (\ref{add14}) is established. Similarly, (\ref{add14}) is also established when $j=6$ since
    \begin{equation*}
    \left\{
             \begin{array}{lr}
             h(\mathcal{A},\{6\})=-0.0012,\\
             h(\mathcal{B},\{6\})=-0.0963,\\
             h(\{6\},\{6\})=0.3845,\\
             \gamma=0.5270.
             \end{array}
    \right.
    \end{equation*}
   In this way, we could verify (\ref{13}) and (\ref{add14}) for all $\mathcal{A} \subseteq \mathcal{B} \subseteq \mathcal{V}$ and $j\in \mathcal{V} \backslash \mathcal{B}$. Although the computing process is complex, it provides a feasible method to judge the submodularity of the convergence error.   
	
\end{example}

\subsection{The Design of FDI-ASSA}
In this part, we describe the core idea of the proposed algorithm FDI-ASSA and obtain the suboptimal subset solution to maximize the convergence error.
%, which exploits the idea of greedy algorithm \cite{ye2020complexity}. 

As shown in Algorithm $1$, we first create a candidate empty attack agent subset $\mathcal{\hat{A}}$ in step $1$. Then, the construction of $\mathcal{\hat{A}}$ is finished in step $2$-$11$ if and only if the total attack cost does not exceed the upper bound $\Omega$. In step $3$-$6$, each possible agent in $\mathcal{V'}$ is traversed and its marginal benefit is calculated. Since choosing the agent that can produce the maximum marginal returns in the limited attack cost budget is the main aim, we only need to concern with the maximum marginal benefit at average attack cost. Finally, the validation for the upper bound of attack cost is finished in step $12$-$14$, and the near-optimal subset $\hat{\mathcal{A}}$ is returned in step $15$.  

\begin{remark}
	The key idea of FDI-ASSA is to find the optimal agent that produces the maximum convergence error under the average attack cost through traversing the possible set $\mathcal{V'}$ for each iteration. 
	%		Within the total attack cost, we enumerate all agents in set $\mathcal{V'}$, and the optimal agent, i.e., produces the maximum convergence error under the average attack cost, is selected. Similarly, once the agents are determined, the next agent is selected in the left set except for the determined agents. 
	FDI-ASSA is scalable and can be applied to other attack strategies.
\end{remark}   
\begin{algorithm}[t]
	%	\small
	\caption{False Data Injection Attack Set Selection Algorithm (FDI-ASSA)}
	\KwIn {Attack cost $c(i)$ for all $i\in \mathcal{V}$; total attack cost $\Omega$; convergence time $t_c$; attack strategy $\mathbf{\theta}$; system matrix $M$.}
	\KwOut{Attack subset $\mathcal{\hat{A}}$}
	$\mathcal{\hat{A}}\leftarrow \emptyset$; $\mathcal{V'}\leftarrow \mathcal{V}$;
	$c(\mathcal{\hat{A}})\leftarrow 0$;\\
	\While {$\mathcal{V'}\notin \emptyset$ and $c(\mathcal{\hat{A}})\leq \Omega$}
	{
		\For {all $a\in \mathcal{V'}$}
		{
			$\mathcal{\hat{A}}_{\alpha} \leftarrow \mathcal{\hat{A}}\cup \{a\}$;\\
			$\mathrm{gain}_a \leftarrow f(\mathcal{\hat{A}}_{\alpha})-f(\mathcal{\hat{A}})$;
		}
		$a^* \leftarrow \mathrm{arg} \mathrm{max}_{a\in \mathcal{V'}}[\frac{\mathrm{gain}_a}{c(a)}]$;\\
		$\mathcal{\hat{A}} \leftarrow \mathcal{\hat{A}}\cup \{a^*\}$;\\
		$\mathcal{V'} \leftarrow \mathcal{V'}\backslash \{a^*\}$;\\
		Compute attack cost $c(\mathcal{\hat{A}})$;
	}
	
	\If {$c(\mathcal{\hat{A}})>\Omega$}
	{$\mathcal{\hat{A}} \leftarrow \mathcal{\hat{A}}\backslash \{a^*\}$}
	\Return{$\mathcal{\hat{A}}$}
\end{algorithm}

\subsection{Performance Analysis of FDI-ASSA}

Next, we analyze the performance of FDI-ASSA, which is shown in Theorem \ref{firstperformance}.
\begin{theorem}\label{firstperformance}
	For the optimization problem (\ref{11}), Algorithm $1$ returns the near-optimal subset $\hat{\mathcal{A}}$ such that 
	\begin{equation}\label{add21}
	\frac{f(\mathcal{\hat{A}})-f(\emptyset)}{f(\mathcal{A^*})-f(\emptyset)}\geq 1-e^{-\frac{c(\mathcal{\hat{A}})}{\Omega}},
	\end{equation}
	where $\mathcal{A^*}$ is the optimal subset.
	In addition, the worst-case running time satisfies $T_w \sim \mathcal{O}(|\Omega| |\mathcal{V}| n^3 m^3)$.
\end{theorem}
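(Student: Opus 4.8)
The plan is to read Theorem \ref{firstperformance} as the standard guarantee for the cost-weighted greedy maximization of a monotone submodular function under a knapsack (budget) constraint, and then separately bound the arithmetic cost of the two nested loops of Algorithm 1. The approximation bound rests entirely on the fact, established in Theorem \ref{firstsubmodular}, that $f$ is normalized, monotone non-decreasing, and submodular; the running-time bound rests on the closed-form evaluation of $f$ given in Lemma \ref{objective}.

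For the approximation bound, I would denote the greedy iterates by $\mathcal{A}_0=\emptyset\subseteq\mathcal{A}_1\subseteq\cdots$, where $\mathcal{A}_k=\mathcal{A}_{k-1}\cup\{a_k^*\}$ and $a_k^*$ is the element selected in step $7$ as the maximizer of $\mathrm{gain}_a/c(a)$. The first step is the standard submodularity consequence: for the optimal set $\mathcal{A}^*$,
\begin{align*}
f(\mathcal{A}^*)-f(\mathcal{A}_{k-1}) \leq \sum_{j\in \mathcal{A}^*\backslash \mathcal{A}_{k-1}}\big[f(\mathcal{A}_{k-1}\cup\{j\})-f(\mathcal{A}_{k-1})\big],
\end{align*}
which follows by monotonicity and iterated submodularity. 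The greedy choice of the maximal marginal-per-cost ratio gives, for every candidate $j$, the bound $f(\mathcal{A}_{k-1}\cup\{j\})-f(\mathcal{A}_{k-1})\leq \frac{c(j)}{c(a_k^*)}\big[f(\mathcal{A}_k)-f(\mathcal{A}_{k-1})\big]$. Substituting this and using the budget constraint $\sum_{j\in\mathcal{A}^*}c(j)\leq\Omega$ from Assumption \ref{ass2} yields the per-step contraction
\begin{align*}
f(\mathcal{A}_k)-f(\mathcal{A}_{k-1}) \geq \frac{c(a_k^*)}{\Omega}\big[f(\mathcal{A}^*)-f(\mathcal{A}_{k-1})\big].
\end{align*}

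Next I would telescope. Writing $\delta_k=f(\mathcal{A}^*)-f(\mathcal{A}_k)$, the contraction reads $\delta_k\leq\delta_{k-1}\big(1-c(a_k^*)/\Omega\big)$, so over the $K$ accepted elements $\delta_K\leq\delta_0\prod_{k=1}^{K}\big(1-c(a_k^*)/\Omega\big)$ with $\delta_0=f(\mathcal{A}^*)-f(\emptyset)$. Applying $1-x\leq e^{-x}$ and $\sum_{k=1}^{K}c(a_k^*)=c(\hat{\mathcal{A}})$ gives $\delta_K\leq\delta_0\,e^{-c(\hat{\mathcal{A}})/\Omega}$, and rearranging produces exactly (\ref{add21}). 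Because $f(\emptyset)=0$ by normalization, the statement is clean. For the running time, I would count: evaluating $f$ once via the formula $\|(U^{\mathrm{T}}\otimes I_m)\hat{M}(U\mathbf{\mu}^{\mathcal{A}}\otimes\mathbf{\theta})\|$ of Lemma \ref{objective} involves Kronecker products and products of $mn\times mn$ objects, costing $\mathcal{O}(n^3m^3)$; the inner \textbf{for} loop (steps $3$–$6$) evaluates the gain over all of $\mathcal{V}'$, contributing a factor $|\mathcal{V}|$; and the outer \textbf{while} loop (steps $2$–$11$) runs at most $\mathcal{O}(|\Omega|)$ times before the budget is exhausted. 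Multiplying gives $T_w\sim\mathcal{O}(|\Omega|\,|\mathcal{V}|\,n^3m^3)$.

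The part requiring the most care is the knapsack version of the telescoping argument: one must use the cost-weighted ratio selection (not an unweighted greedy) so that the exponent comes out as the data-dependent quantity $c(\hat{\mathcal{A}})/\Omega$ rather than a fixed $1-1/e$, and one must account correctly for the stop-and-remove step (steps $12$–$14$) so that the product runs over precisely the accepted elements whose total cost equals the returned $c(\hat{\mathcal{A}})$. Everything else — the submodular inequality, the geometric decay, and the loop counting — is routine once Theorem \ref{firstsubmodular} and Lemma \ref{objective} are in hand.
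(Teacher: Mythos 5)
Your proposal is correct and follows essentially the same route as the paper: the Fisher--Nemhauser--Wolsey decomposition of $f(\mathcal{A}^*)-f(\mathcal{A}_{k-1})$ over $\mathcal{A}^*\backslash\mathcal{A}_{k-1}$, the cost-ratio greedy bound, the per-step contraction $\delta_k\leq(1-c(a_k^*)/\Omega)\delta_{k-1}$, telescoping with $1-x\leq e^{-x}$, and the same $|\Omega|\cdot|\mathcal{V}|\cdot\mathcal{O}(n^3m^3)$ loop count. The only cosmetic difference is that the paper first works with $\Omega^*=c(\mathcal{A}^*)$ and then relaxes to $\Omega$, whereas you substitute the budget bound directly; your bookkeeping of the exponent as $\sum_k c(a_k^*)=c(\hat{\mathcal{A}})$ is in fact slightly cleaner than the paper's, which writes $c(s_l)$ where the accumulated cost is intended.
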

\begin{proof}
	Let $\mathcal{A^*}$ be an optimal solution in (\ref{11}) and $\Omega^* \triangleq c(\mathcal{A^*})$. Let $s_i$ be the $i$-th agent added in $\mathcal{S}$ during the $i$-th iteration of step $2$-$11$ and $\mathcal{S}_i \triangleq \{s_1,s_2,\cdots,s_i\}$.
	We consider Algorithm $1$'s step $2$-$11$ terminate after $l+1$ iterations.
	
	First, we prove that for $i=1,2,\cdots,l+1$, it holds
	\begin{align}\label{add22}
	f(\mathcal{A^*})-f(\mathcal{S}_{i-1})\leq \Omega^* \frac{f(\mathcal{S}_{i})-f(\mathcal{S}_{i-1})}{c(s_{i})}.
	\end{align}
	
	Due to the monotonicity of the convergence error in (\ref{add13}),
	\begin{align*}
	f(\mathcal{A^*})-f(\mathcal{S}_{i-1})\leq& f(\mathcal{A^*}\cup \mathcal{S}_{i-1})-f(\mathcal{S}_{i-1})\\
	=& f(\mathcal{A^*}\backslash \mathcal{S}_{i-1} \cup{\mathcal{S}_{i-1}})-f(\mathcal{S}_{i-1}).
	\end{align*}
	Let $\{z_1,z_2,\cdots,z_m\}\triangleq \mathcal{A^*}\backslash \mathcal{S}_{i-1}$, and
	{\small{
	\begin{align*}
	\beta_j\triangleq f(\mathcal{S}_{i-1} \cup \{z_1,z_2,\cdots,z_{j}\})-f(\mathcal{S}_{i-1} \cup \{z_1,z_2,\cdots,z_{j-1}\})
	\end{align*}}}for $j=1,2,\cdots,m$. Then, based on the [\textit{proposition $1.1$} \cite{fisher1978analysis}], we have
    \begin{align*}
    f(\mathcal{A^*})-f(\mathcal{S}_{i-1})\leq \sum_{j=1}^{m}\beta_j.
    \end{align*}
  Furthermore, we can infer  
	\begin{align*}
	\frac{\beta_j}{c(z_j)}\leq \frac{f(\mathcal{S}_{i-1}\cup \{z_{j}\})-f(\mathcal{S}_{i-1})}{c(z_j)} \leq \frac{f(\mathcal{S}_{i})-f(\mathcal{S}_{i-1})}{c(s_i)}. 
	\end{align*}
	Since $\sum_{j=1}^{m}c(z_j)\leq \Omega^*$,
	\begin{align*}
	f(\mathcal{A^*})-f(\mathcal{S}_{i-1})\leq \sum_{j=1}^{m}\beta_j\leq \Omega^* \frac{f(\mathcal{S}_{i})-f(\mathcal{S}_{i-1})}{c(s_{i})}.
	\end{align*}
	The above proof process is similar to Lemma $13$ in \cite{tzoumas2020lqg}. Based on (\ref{add22}), we have
	\begin{align*}
	f(\mathcal{S}_i)-f(\mathcal{S}_{i-1})
	\geq \frac{c(s_i)}{\Omega^*} [f(\mathcal{A^*})-f(\mathcal{S}_{i-1})].
	\end{align*}
	
	Next, we show the relationship between $f(\mathcal{S}_i)$ and $f(\mathcal{A^*})$.
	For $i=1,2,\cdots,l+1$,
	\begin{align}\label{add23}
	f(\mathcal{A^*})-f(\mathcal{S}_{i-1})-[f(\mathcal{A^*})-f(\mathcal{S}_{i})]
	\nonumber \\
	\geq \frac{c(s_i)}{\Omega^*} [f(\mathcal{A^*})-f(\mathcal{S}_{i-1})].
	\end{align}
	Let $\delta_{i-1}\triangleq f(\mathcal{A^*})-f(\mathcal{S}_{i-1})$. Then, (\ref{add23}) is transformed as
	\begin{align}\label{add24}
	\delta_{i-1}-\delta_i\geq \frac{c(s_i)}{\Omega^*} \delta_{i-1}.
	\end{align}
	From (\ref{add24}), it follows that $\delta_i\leq (1-\frac{c(s_i)}{\Omega^*})\delta_{i-1}$. Rearranging terms, we have 
	\begin{align*}
	\delta_i\leq \prod_{j=1}^{i} (1-\frac{c(s_i)}{\Omega^*}) f(\mathcal{A^*}).
	\end{align*}
	Then, it is inferred that
	\begin{align}\label{add25}
	f(\mathcal{S}_i)\geq [1-\prod_{j=1}^{i} (1-\frac{c(s_i)}{\Omega^*})]f(\mathcal{A^*}).
	\end{align}
	Since the convergence error is normalized, we have $f(\mathcal{\emptyset})=0$. Combining it with (\ref{add25}), we have
	\begin{align*}
	f(\mathcal{S}_l)-f(\mathcal{\emptyset}) \geq& [1-\prod_{j=1}^{l} (1-\frac{c(s_l)}{\Omega^*})][f(\mathcal{A^*})-f(\mathcal{\emptyset})]\\
	\geq & (1-e^{-\frac{c(s_l)}{\Omega^*}})[f(\mathcal{A^*})-f(\mathcal{\emptyset})]\\
	\geq & (1-e^{-\frac{c(s_l)}{\Omega}})[f(\mathcal{A^*})-f(\mathcal{\emptyset})],
	\end{align*}
	where the second inequality follows from Lemma $9$ in \cite{tzoumas2020lqg} with $\gamma=1$, and the third form that $\frac{c(s_i)}{\Omega^*}\geq \frac{c(s_i)}{\Omega}$, since $\Omega^*\leq \Omega$, which implies $1-e^{-\frac{c(s_l)}{\Omega^*}}\geq 1-e^{-\frac{c(s_l)}{\Omega}}$. The lower bound of the actual solution relative to the optimal value is received.
	
	Finally, we show the FDI-ASSA's running time. There are at most $|\Omega|$ iterations in step $2$-$11$ if all agents' attack cost is the same. And for each iteration, there are at most
	$|\mathcal{V}|$ iterations in step $3$-$6$ since the number of agents that can be selected from set $|\mathcal{V'}|$ decreases. In addition, the optimization process in step $3$-$6$ can be computed in $\mathcal{O}(n^3 m^3)$ per time step. Therefore, the total worst-case running time is $\mathcal{O}(|\Omega| |\mathcal{V}| n^3 m^3)$.
\end{proof}
\begin{remark}
	Theorem \ref{firstperformance} derives the gap between the obtained solution and the optimal one.
	In (\ref{add21}),  $f(\mathcal{\hat{A}})-f(\emptyset)$ quantifies the gain from selecting $\mathcal{\hat{A}}$, and the right-hand side of (\ref{add21}) guarantees that the marginal gain is close to the optimal $f(\mathcal{A^*})-f(\emptyset)$. Specifically, only when the attack cost $c(\mathcal{\hat{A}})=\Omega$, the algorithm returns the optimal subset. Meanwhile, we analyze the effects of the dimensions of the state, the number of agents, and the total attack cost on the running time to find the subset of the compromised agents, shown as $T_w$. 
\end{remark}
\begin{figure}[t]
	\centering
	\includegraphics[width=0.45\textwidth]{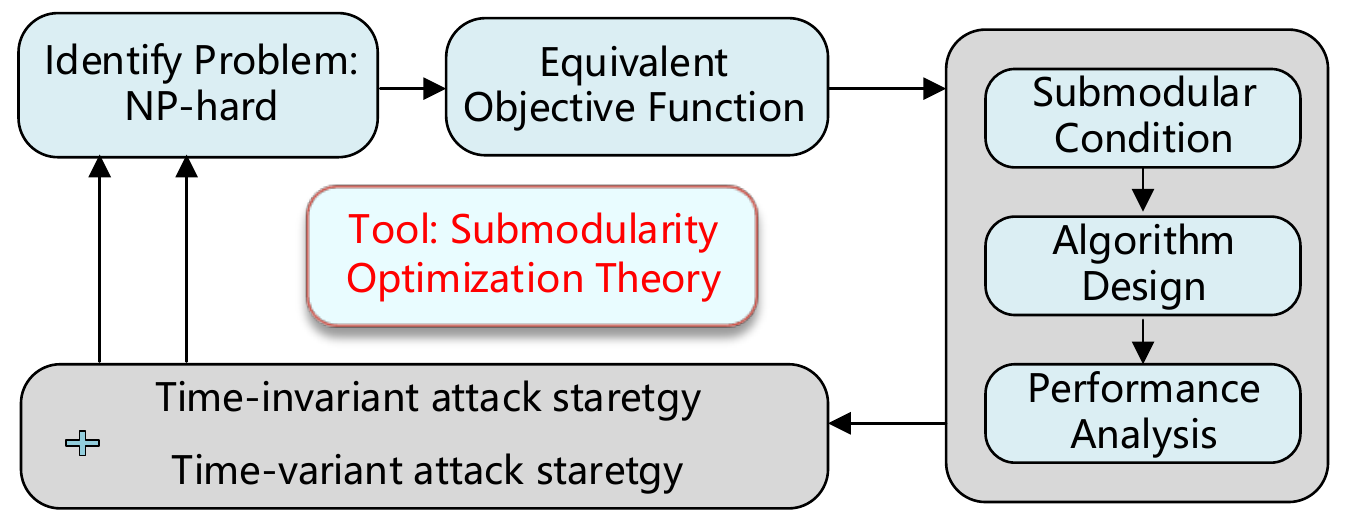}
	\caption{The analysis schematic of the FDI attack design problem}
	\label{structureflow}
	\vspace*{-10pt}
\end{figure}

\section{Time-variant attack set selection}\label{IV}
In this section, we extend the analysis of submodularity of the convergence error under the time-invariant attack strategy to the case under the time-variant attack strategy. In addition, we develop an improved false data injection attack set selection algorithm (IFDI-ASSA) to reduce the running time while approaching the optimal solution. In summary, the analysis diagram of the FDI attack design problem is shown in Fig. \ref{structureflow}.

\subsection{NP-hardness of Problem $\mathcal{P}_1$}
First, we show the complexity of the problem $\mathcal{P}_1$ under time-variant attacks.	
\begin{lemma}\label{NPhard1}
	\textbf{(Complexity)}.
	The problem $\mathcal{P}_1$ under the time-variant attack strategy $\mathbf{\theta}(t)$ is NP-hard.
\end{lemma}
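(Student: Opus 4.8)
The plan is to prove NP-hardness of the time-variant problem by reduction from its time-invariant counterpart, which Lemma~\ref{NPhard} has already shown to be NP-hard. The key structural observation is that the time-invariant attack strategy is nothing but the special case $\mathbf{\theta}(t)\equiv\mathbf{\theta}$ of the time-variant strategy; consequently every time-invariant instance of $\mathcal{P}_1$ \emph{is} a legitimate time-variant instance, so the time-variant problem can only be harder.

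Concretely, I would define the reduction as follows. Given any instance of $\mathcal{P}_1$ under a time-invariant strategy $\mathbf{\theta}$ (system matrix $M$, costs $c(i)$, budget $\Omega$, bounds $\bar{u},\bar{g}$), map it to the time-variant instance obtained by taking the constant function $\mathbf{\theta}(t):=\mathbf{\theta}$ for all $t\in[0,t_c]$. This map is computable in constant time, and it preserves every ingredient of $\mathcal{P}_1$: the feasible region is unchanged because $\|\mathbf{u}_i^{a}\|\le\bar{u}$ and $\sum_{i\in\mathcal{A}}c(i)\le\Omega$ do not involve time, while the energy constraint $\int_{0}^{t_c}\mathbf{\theta}(\tau)\,d\tau\le\bar{g}$ collapses to $t_c\,\mathbf{\theta}\le\bar{g}$; moreover the objective $f(\mathcal{A})$ in (\ref{add10}) is evaluated on the identical trajectory (\ref{10}). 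Hence a subset $\mathcal{A}$ is optimal for the constructed time-variant instance if and only if it is optimal for the original time-invariant instance, so solving $\mathcal{P}_1$ under time-variant attacks in polynomial time would solve it under time-invariant attacks in polynomial time. By Lemma~\ref{NPhard} this is impossible unless $\mathrm{P}=\mathrm{NP}$, and the NP-hardness follows.

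An alternative, self-contained route is to repeat the $0$-$1$ knapsack reduction of Lemma~\ref{NPhard} directly in the time-variant setting: setting $M=aI$ with $a<0$ and using (\ref{10}), one has $\kappa(\mathcal{A},t)=\mathbf{\mu}^{\mathcal{A}}\otimes\int_{0}^{t}e^{a(t-\tau)}\mathbf{\theta}(\tau)\,d\tau$, whose $i$-th block equals $\int_{0}^{t}e^{a(t-\tau)}\mathbf{\theta}(\tau)\,d\tau$ when $i\in\mathcal{A}$ and $\mathbf{0}$ otherwise, so that the marginal gain of adding agent $i$ plays the role of a knapsack value and $c(i)$ the role of a knapsack weight. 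The part I expect to be the real obstacle here is that model (\ref{6}) forces all compromised agents to share the same injected signal $\mathbf{\theta}(t)$, which makes the per-agent contributions identical a priori; to emulate distinct knapsack values $\alpha_i$ one must break this symmetry through the dynamics (a diagonal but non-scalar $M$) or through the cost vector, and then verify under the energy bound $\bar{g}$ that the integral $\int_{0}^{t_c}e^{a(t_c-\tau)}\mathbf{\theta}(\tau)\,d\tau$ realizes the prescribed values. Because the containment argument of the previous paragraph sidesteps this bookkeeping entirely, I would adopt it as the main proof and mention the explicit reduction only as a remark.
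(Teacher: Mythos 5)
Your main argument is correct in substance but takes a genuinely different route from the paper. You reduce \emph{from} the time-invariant problem (Lemma~\ref{NPhard}) by observing that a constant signal is a degenerate time-variant signal, so the time-variant problem contains the time-invariant one as a special case and inherits its hardness. The paper instead redoes the $0$-$1$ knapsack reduction from scratch in the time-variant setting: taking $M=aI$ with $a<0$, it writes the per-agent contribution as $f(\mathcal{A})_i=\{\sum_{j=m(i-1)+1}^{mi}(e_j^{\mathrm{T}}\int_0^t e^{-a(t-\tau)}\mathbf{\theta}(\tau)\,d\tau)^2\}^{1/2}$ and matches it to the knapsack values via $\sum_j (e_j^{\mathrm{T}}\int_0^t e^{-(t-\tau)}\mathbf{\theta}(\tau)\,d\tau)^2=\alpha_i^2$ --- exactly the ``alternative, self-contained route'' you sketch and then set aside. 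Your containment argument buys brevity and sidesteps the bookkeeping you correctly identify as delicate (with a scalar $M=aI$ and a single shared signal $\mathbf{\theta}(t)$, all agents' marginal contributions are identical, so realizing \emph{distinct} values $\alpha_i$ is not straightforward --- a tension that is present in the paper's own reduction as well); the paper's route has the advantage of not leaning on the correctness of Lemma~\ref{NPhard}. One caveat you should address explicitly: Assumption~\ref{ass5} and the remark following it define a time-variant strategy as one that ``varies over the time,'' which read literally excludes constant $\mathbf{\theta}(t)$; your reduction map then lands outside the admissible instance class. This is easily patched --- replace the constant by any genuinely time-varying integrable $\mathbf{\theta}(t)$ whose convolution $\int_0^{t_c}e^{M(t_c-\tau)}\mathbf{\theta}(\tau)\,d\tau$ equals the value produced by the constant strategy, so the objective and constraints are unchanged --- but as written the ``special case'' claim is contestable under the paper's own definitions and the patch should be stated.
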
	
\begin{proof}
	First, we find the reduction of problem $\mathcal{P}_1$ under the time-variant attack strategy.
	Different from the proof in Lemma \ref{NPhard}, when $M=aI$ with $a<0$ being some constant and $\mathbf{\theta}(t)=[\theta_1(t), \cdots, \theta_m(t)]^{\mathrm{T}}$ with $\theta_i(t) \in \mathbb{R}_{\geq 0}$, the reduction of problem $\mathcal{P}_1$ by adding the compromised agent $i$ is denoted as
	\begin{align}
	f(\mathcal{A})_i=\{\sum_{j=m(i-1)+1}^{mi} (e_j^{\mathrm{T}} \int_{0}^{t} e^{-a(t-\tau)}  \mathbf{\theta}(\tau) d \tau )^2 \}^\frac{1}{2},
	\end{align}
	where $e_j \in \mathbb{R}^{m}$ is the canonical vector with $1$ in the $j$-th entry and $0$ elsewhere.
	
	Next, we find the corresponding instance of problem $\mathcal{P}_1$ under time-variant attacks for $0$-$1$ backpack problem. Given the set of values $\{\alpha_i\}$ and the other same parameters, when the stable system matrix $M=-I$ and the attack strategy $\mathbf{\theta}(t)=[\theta_1(t), \cdots, \theta_m(t)]^{\mathrm{T}}$ meets 
	\begin{align*}
	\sum_{j=m(i-1)+1}^{mi} (e_j^{\mathrm{T}} \int_{0}^{t} e^{-(t-\tau)} \mathbf{\theta}(\tau) d \tau )^2=\alpha_i^2
	\end{align*}
	for any $i\in \mathcal{A}$, we can see that the selected subset $\mathcal{A}$ for $0$-$1$ knapsack problem is optimal if and only if it is optimal for the corresponding problem $\mathcal{P}_1$. Thus, the proof is completed.
\end{proof}

\subsection{Submodularity under Time-variant Attacks}
\begin{theorem}\label{secondsubmodular}
	\textbf{(Submodularity)}.
	Under Assumptions $1$-$3$ with the time-variant attack strategy $\mathbf{\theta}(t)$, the convergence error $f(\mathcal{A})$ in problem $\mathcal{P}_1$ is normalized and monotone non-decreasing submodular. 
	%iff
	%		\begin{align}\label{21}
	%		\int_{0}^{t} e^{M(t-\tau)} \mathbf{\mu}^{\mathcal{B}-\mathcal{A}} \otimes \mathbf{\theta}(\tau) d \tau
	%		\end{align}
	%		 is an non-negative vector in any given time $t$ or 
	%		\begin{align}\label{22}
	%		\int_{0}^{t} e^{M(t-\tau)} \mathbf{\mu}^{\mathcal{B}+\mathcal{A}} \otimes \mathbf{\theta}(\tau) d \tau 
	%		\end{align}
	%		is an non-positive vector in any given time $t$.
\end{theorem}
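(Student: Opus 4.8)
My plan is to mirror the development of Lemma~\ref{objective} and Theorem~\ref{firstsubmodular}, replacing the single shared direction $\mathbf\theta$ by the time-integrated mode responses. First I would apply the coordinate change $\mathbf z=(U\otimes I_m)\mathbf x$ to (\ref{9})--(\ref{10}) exactly as in the proof of Lemma~\ref{objective}; the block-diagonalization $M=(U\otimes I_m)(I_n\otimes A-\bar d J\otimes B)(U^{\mathrm T}\otimes I_m)$ decouples the dynamics into the $n$ modes $A-\lambda_k\bar d B$. Since the consensus (homogeneous) component is common to the attacked and unattacked trajectories, it cancels in $\kappa(\mathcal A,t)=\mathbf x(\mathbf\mu^{\mathcal A},t)-\mathbf x(t)$, and, letting $t\to t_c$, the surviving term is the particular solution driven by $\mathbf\mu^{\mathcal A}\otimes\mathbf\theta(\tau)$. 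Writing $\psi_i=\int_0^{t_c}e^{M(t_c-\tau)}(e_i\otimes\mathbf\theta(\tau))\,\mathrm d\tau$ for the per-agent contribution, this yields the time-variant equivalence $f(\mathcal A)=\big\|\sum_{i\in\mathcal A}\psi_i\big\|$, equivalently $f(\mathcal A)^2=\sum_{k=1}^n (U\mathbf\mu^{\mathcal A})_k^2\,\|\Phi_k\|^2$ with $\Phi_k=\int_0^{t_c}e^{(A-\lambda_k\bar d B)(t_c-\tau)}\mathbf\theta(\tau)\,\mathrm d\tau$. This is the analog of (\ref{add13}) and makes the set-function structure explicit.

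Next I would recycle the algebra of Theorem~\ref{firstsubmodular} with the redefined bilinear form $h(\mathcal A,\mathcal B)\triangleq\langle\sum_{i\in\mathcal A}\psi_i,\,\sum_{j\in\mathcal B}\psi_j\rangle$, so that $f(\mathcal A)=h(\mathcal A,\mathcal A)^{1/2}$ and the decomposition $h(\mathcal B,\mathcal B)=h(\mathcal A,\mathcal A)+h(\mathcal B\backslash\mathcal A,\mathcal B\backslash\mathcal A)+2h(\mathcal A,\mathcal B\backslash\mathcal A)$ holds verbatim. Normalization is immediate: $\mathcal A=\emptyset$ gives $\mathbf\mu^{\mathcal A}=\mathbf 0$ and hence $f(\emptyset)=0$. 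For monotonicity I would establish the time-variant analog of (\ref{13}), namely that the cross term $h(\mathcal A\cup\mathcal B,\mathcal B\backslash\mathcal A)$ is nonnegative, which together with the decomposition gives $f(\mathcal A)\le f(\mathcal B)$ for all $\mathcal A\subseteq\mathcal B$.

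Finally, submodularity is the crux. I would reintroduce $\rho_j(\mathcal A)=f(\mathcal A\cup\{j\})-f(\mathcal A)$ and $\rho_j^{+}(\mathcal A)=f(\mathcal A\cup\{j\})+f(\mathcal A)$ exactly as in Theorem~\ref{firstsubmodular}, reducing $\rho_j(\mathcal A)\ge\rho_j(\mathcal B)$ to the quadratic-form inequality (\ref{add14}) with the ratio $\gamma=\rho_j^{+}(\mathcal B)^{-1}/\rho_j^{+}(\mathcal A)^{-1}\in[0,1]$. The conceptual difference from the time-invariant setting, and the reason one hopes no side condition is needed, is that each $\psi_i$ is an \emph{independent} time-integrated response rather than a fixed scalar multiple of one common vector $\mathbf\theta$; the Gram matrix $[\langle\psi_i,\psi_j\rangle]$ is positive semidefinite, and I would try to leverage the concavity of $t\mapsto\sqrt t$ to force the diminishing-returns inequality. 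I expect the genuinely hard part to be showing that (\ref{add14}) holds for \emph{every} pair $\mathcal A\subseteq\mathcal B$ with no extra hypothesis, because positive inner products alone do not guarantee that $\big\|\sum_{i\in S}\psi_i\big\|$ is submodular; the cross terms $h(\mathcal B,\{j\})$ can grow faster than the normalizer $\rho_j^{+}$ shrinks. I would therefore either bound $2h(\mathcal A,\{j\})$ against $2h(\mathcal B,\{j\})$ using $h(\mathcal A,\mathcal A)\le h(\mathcal B,\mathcal B)$ together with Cauchy--Schwarz on the $\psi_i$, or, as a fallback, verify directly that the time-variant gains $\Phi_k$ satisfy conditions (\ref{13})--(\ref{add14}) of Theorem~\ref{firstsubmodular}, thereby reducing Theorem~\ref{secondsubmodular} to the already-proved characterization.
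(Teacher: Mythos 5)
Your setup is sound and your instinct about where the difficulty lies is exactly right, but the proposal stops short of the one idea the paper's proof actually turns on. The paper does not re-derive conditions (\ref{13})--(\ref{add14}) for the time-variant case. It works directly with $\kappa(\mathcal{A},t_c)=\int_0^{t_c}e^{M(t-\tau)}\mathbf{\mu}^{\mathcal{A}}\otimes\mathbf{\theta}(\tau)\,d\tau$ and asserts the coordinatewise orthogonality $(e_l^{\mathrm{T}}\kappa(\mathcal{A},t_c))(e_l^{\mathrm{T}}\kappa(\{j\},t_c))=0$ for every $l$ and every $j\notin\mathcal{A}$. Granting that, the cross terms in $\rho_j(\mathcal{A})\rho_j^{+}(\mathcal{A})=f(\mathcal{A}\cup\{j\})^2-f(\mathcal{A})^2$ vanish identically, so this quantity equals $f(\{j\})^2$ independently of the base set; i.e., $f^2$ is modular, and $f=\sqrt{f^2}$ inherits submodularity from monotonicity alone via $\rho_j(\mathcal{A})=f(\{j\})^2/\rho_j^{+}(\mathcal{A})$ with $\rho_j^{+}$ non-decreasing in the set. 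This is precisely the step you flag as ``the genuinely hard part'' and then leave open: neither of your two fallbacks closes it. Cauchy--Schwarz on the $\psi_i$ gives no domination of $h(\mathcal{B},\{j\})$ by $h(\mathcal{A},\{j\})$ in the direction you need, and positive semidefiniteness of the Gram matrix $[\langle\psi_i,\psi_j\rangle]$ is, as you yourself observe, insufficient for submodularity of $\|\sum_{i\in S}\psi_i\|$. Worse, your own spectral formula $f(\mathcal{A})^2=\sum_k(U\mathbf{\mu}^{\mathcal{A}})_k^2\|\Phi_k\|^2$ contains cross terms $U_{ki}U_{kj}$ for $i\neq j$, which is evidence \emph{against} unconditional modularity of $f^2$ rather than for the unconditional inequality (\ref{add14}).

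So the concrete gap is that you never establish that the marginal increment of $f^2$ is independent of the base set, and nothing in your toolkit (positive inner products, concavity of $\sqrt{t}$) can substitute for it. To repair the argument along the paper's lines you would have to show that $\kappa(\mathcal{A},t_c)$ and $\kappa(\{j\},t_c)$ have disjoint coordinate supports, which is clear when $e^{Mt}$ is block-diagonal across agents but is not obvious for a general coupled $M$ --- a delicacy in the paper's own proof, but one you would at least need to confront explicitly rather than route around. Your monotonicity step has the same issue in miniature: $\sum_l(e_l^{\mathrm{T}}\kappa(\mathcal{B},t_c))^2\ge\sum_l(e_l^{\mathrm{T}}\kappa(\mathcal{A},t_c))^2$ is only immediate under the same disjoint-support or nonnegative-cross-term hypothesis, and you defer it to an unproved time-variant analog of (\ref{13}), whereas Theorem \ref{secondsubmodular} asserts monotonicity and submodularity with no side condition at all.
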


\begin{proof}
	The proof is divided into two parts. The first part is to prove that the convergence error is monotone non-decreasing. The second part is to show the submodularity of the convergence error.		
	
	\textbf{Monotonicity:} Under time-variant attacks, the convergence error can be written as
	\begin{align*}
	f(\mathcal{A})=\|\int_{0}^{t_c}e^{M(t-\tau)}\mathbf{\mu}^{\mathcal{A}}\otimes \mathbf{\theta}(\tau)d\tau\|=\|\kappa(\mathcal{A},t_c)\|.
	\end{align*}
	When $\mathcal{A}={\emptyset}$, we have $\mathbf{\mu}^{\mathcal{A}}=\mathbf{0}$. Thus, it is inferred that $f({\emptyset})=0$ and the convergence error is normalized. Then, for the set $\mathcal{A}\subseteq \mathcal{B} \subseteq \mathcal{V}$, we assume $\mathcal{A}=\{a_1,a_2,\cdots,a_s\}$ and $\mathcal{B}=\{a_1,a_2,\cdots,a_s,a_t\}$. It follows that	
	\begin{align*}
	f(\mathcal{A})=&\|\int_{0}^{t} e^{M(t-\tau)} \mathbf{\mu}^\mathcal{A} \otimes \mathbf{\theta}(\tau) d \tau \|\nonumber \\
	%		    =&\{\sum_{l=1}^{mn} (e_l^{\mathrm{T}} \int_{0}^{t} e^{M(t-\tau)} \mathbf{\mu}^\mathcal{A} \otimes \mathbf{\theta}(\tau) d \tau )^2\}^{\frac{1}{2}}\\
	=&\{\sum_{l=1}^{mn} (e_l^{\mathrm{T}} \kappa(\mathcal{A},t_c))^2\}^{\frac{1}{2}},
	\end{align*}
	where $e_l \in \mathbb{R}^{mn}$ is denoted as the canonical vector with one in the $l$-th entry and $0$ elsewhere.
	
	For monotonicity, the requirement is that $f(\mathcal{B}) \geq f(\mathcal{A})$ always holds true when $\mathcal{A} \subseteq \mathcal{B}$. Since
	\begin{align*}
	f(\mathcal{B})=&\|\int_{0}^{t} e^{M(t-\tau)} (\mathbf{\mu}^{\mathcal{B} \backslash \mathcal{A}}+ \mathbf{\mu}^{\mathcal{A}})  \otimes \mathbf{\theta}(\tau) d \tau \|\\
	=& \{\sum_{l=1}^{mn} (e_l^{\mathrm{T}} \kappa(\mathcal{B},t_c))^2 \}^{\frac{1}{2}}\\
	\geq & \{\sum_{l=1}^{mn} (e_l^{\mathrm{T}} \kappa(\mathcal{A},t_c))^2 \}^{\frac{1}{2}},
	\end{align*}
	the monotonicity of the convergence error always holds.
	
	\textbf{Submodularity:} For submodularity, the requirement is that $f(\mathcal{A} \cup \{j\})-f(\mathcal{A})\geq f(\mathcal{B} \cup \{j\})-f(\mathcal{B})$ for the adding element $j\in \{a_{t+1},\cdots,a_{n}\}$. Let $\rho_j(\mathcal{A}) \triangleq f(\mathcal{A} \cup \{j\})-f(\mathcal{A})$ and $\rho_j^{+}(\mathcal{A}) \triangleq f(\mathcal{A} \cup \{j\})+f(\mathcal{A})$.
	This requirement can be rewritten as
	\begin{align*}
	\rho_j(\mathcal{A}) \geq \rho_j(\mathcal{B}).
	\end{align*}
	Since 
	\begin{align*}
	\rho_j(\mathcal{A}) \rho_j^{+}(\mathcal{A})=&\sum_{l=1}^{mn} (e_l^{\mathrm{T}} \kappa(\mathcal{A}\cup \{j\},t_c) )^2 -\sum_{l=1}^{mn} (e_l^{\mathrm{T}} \kappa(\mathcal{A},t_c) )^2\\
	=&2 \sum_{l=1}^{mn} (e_l^{\mathrm{T}} \kappa(\mathcal{A},t_c))(e_l^{\mathrm{T}} \kappa(\{j\},t_c))\\
	&+\sum_{l=1}^{mn} (e_l^{\mathrm{T}} \kappa(\{j\},t_c))^2\\
	=& \sum_{l=1}^{mn} (e_l^{\mathrm{T}} \kappa(\{j\},t_c))^2,
	\end{align*}
	where the second equality follows from the expansion of the sum of squares and the third equality follows from that 
	\begin{align*}
	(e_l^{\mathrm{T}} \kappa(\mathcal{A},t_c))(e_l^{\mathrm{T}} \kappa(\{j\},t_c))=0
	\end{align*}
	for any $l$. Similarly, we have 
	\begin{align*}
	\rho_j(\mathcal{B}) \rho_j^{+}(\mathcal{B})=\sum_{l=1}^{mn} (e_l^{\mathrm{T}} \kappa(\{j\},t_c))^2.
	\end{align*}
	Then, it is easy to infer
	\begin{align}
	\frac{\rho_j(\mathcal{A})}{\rho_j(\mathcal{B})}=\frac{\rho_j^{+}(\mathcal{B})}{\rho_j^{+}(\mathcal{A})}.
	\end{align}
	Based on the monotonicity of the convergence error, it is easy to infer $\rho_j^{+}(\mathcal{B})\geq \rho_j^{+}(\mathcal{A})$. Therefore, $\rho_j(\mathcal{A}) \geq \rho_j(\mathcal{B})$
	always holds and the submodularity of the convergence error is satisfied.
\end{proof}

Theorem \ref{secondsubmodular} shows the submodularity of the convergence error under time-variant attacks, which is a natural property and does not depend on the given time $t$, the system matrix $M$, and the attack strategy $\mathbf{\theta}(t)$. Instead of transforming the convergence error to an approximate value, the proof in Theorem \ref{secondsubmodular} provides a new insight to demonstrate the submodularity of the convergence error. Recalling the complex submodular conditions in Theorem \ref{firstsubmodular}, we can infer that the conditions always hold. 

\subsection{The Design of IFDI-ASSA}
In this part, we propose an improved FDI-ASSA (IFDI-ASSA) to obtain the suboptimal subset solution to maximize the convergence error, which is shown in Algorithm $2$. Compared with Algorithm $1$, the main differences of Algorithm $2$ lie in the adjustment of the circulation body (see Algorithm $1$' step $2$-$11$ and Algorithm $2$' step $2$-$17$), where Algorithm $2$ only calculates the marginal returns of the agents in the given total attack cost. 

	\begin{algorithm}[t]
	%	\small
	\caption{Improved False Data Injection Attack Set Selection Algorithm (IFDI-ASSA)}
	%		Specify the time $t_c$ satisfying the submodularity given the attack strategy;\\
	\KwIn {Attack cost $c(i)$ for all $i\in \mathcal{V}$; total attack cost $\Omega$; convergence time $t_c$; attack strategy $\mathbf{\theta}$; system matrix $M$.}
	\KwOut{Attack subset $\mathcal{\hat{A}}$}
	$\mathcal{\hat{A}}\leftarrow \emptyset$; $\mathcal{V'}\leftarrow \mathcal{V}$;
	$c(\mathcal{\hat{A}})\leftarrow 0$;\\
	\While {$\mathcal{V'}\notin \emptyset$ and $c(\mathcal{\hat{A}})< \Omega$}
	{
		$\mathcal{\hat{A}}_{\alpha} \leftarrow \mathcal{\hat{A}}$;\\
		\For {all $a\in \mathcal{V'}$}
		{
			\If {$c(\mathcal{\hat{A}} \cup \{a\})\leq \Omega$}
			{
				$\mathcal{\hat{A}}_{\alpha} \leftarrow \mathcal{\hat{A}}\cup \{a\}$;\\
				$\mathrm{gain}_a \leftarrow f(\mathcal{\hat{A}}_{\alpha})-f(\mathcal{\hat{A}})$;
			}
			
		}
		\If {$\mathcal{\hat{A}}_{\alpha} == \mathcal{\hat{A}}$}
		{
			break;
		}
		$a^* \leftarrow \mathrm{arg} \mathrm{max}_{a\in \mathcal{V'}}[\frac{\mathrm{gain}_a}{c(a)}]$;\\
		$\mathcal{\hat{A}} \leftarrow \mathcal{\hat{A}}\cup \{a^*\}$;\\
		$\mathcal{V'} \leftarrow \mathcal{V'}\backslash \{a^*\}$;\\
		Compute attack cost $c(\mathcal{\hat{A}})$;
	}
	
	%		\If {$c(\mathcal{\hat{A}})>\Omega$}
	%		{$\mathcal{\hat{A}} \leftarrow \mathcal{\hat{A}}\backslash \{a^*\}$}
	\Return{$\mathcal{\hat{A}}$}
\end{algorithm}

\subsection{Performance Analysis of IFDI-ASSA}
In this part, we analyze the performance of IFDI-ASSA.
%	\begin{theorem}\label{theorem6}
For the optimization problem (\ref{11}), IFDI-ASSA returns the near-optimal subset $\hat{\mathcal{A}}$ such that it provides a better suboptimal upper bound and a shorter running time than FDI-ASSA
even though it produces the same suboptimal lower bound and worst-case running time as Theorem \ref{firstperformance}. 
%		\begin{equation}\label{27}
%		\frac{f(\mathcal{\hat{A}})-f(\emptyset)}{f(\mathcal{A^*})-f(\emptyset)}\geq 1-e^{-\frac{c(\mathcal{\hat{A}})}{\Omega}},
%		\end{equation}
%		where $\mathcal{A^*}$ is the optimal subset.
%		In addition, the worst-case running time satisfies $T_{w'} \sim \mathcal{O}(|\Omega| |\mathcal{V}| n^3 m^3)$.
%	\end{theorem} 
%	\begin{proof}

First, we explain why IFDI-ASSA provides a better suboptimal upper bound and a shorter running time. IFDI-ASSA aims to avoid traversing each agent in available set $\mathcal{V}'$ and only the agents whose participation does not exceed the total attack cost are access to computing the marginal returns (see step $5$-$8$ of Algorithm $2$). Compared with FDI-ASSA, IFDI-ASSA can guarantee all possible agents have been tried while reducing the running time when the attack cost of each agent is not the same. Specifically, when the attack cost of the selected optimal agent $a^{*}$ exceeds the total attack cost, it must be eliminated in Algorithm $1$' step $12$-$14$. At the same time, there exists another agent whose participation does not exceed the total attack cost, which should be added in the attack set $\hat{\mathcal{A}}$. IFDI-ASSA can compute the marginal returns of the agent, which further narrows down the field of computation and provide a larger convergence error. Unfortunately, FDI-ASSA cannot include it. An illustrative example is shown as Example $2$. 

\begin{example}
	There are six agents connected in the linear network, whose attack cost meet $c(1)=1$, $c(6)=1$, and $c(i)=2$ for $i=2,3,4,5$. Assume the total attack cost $\Omega=6$ and the selected attack set $\mathcal{\hat{A}}=\{1,2,3\}$ in the $3$-th iteration. For the next iteration (i.e., $4$-th iteration), if the optimal select agent $a^{*}=4$, the agent $i=4$ will be eliminated in step $11$-$13$ of Algorithm $1$. Thus, the obtained solution is not optimal and there is a deviation between the optimal and the actual convergence error. Nevertheless, in Algorithm $2$, only the agents whose participation does not exceed the given total attack cost can be accessed. As a result, the agent $i=6$ will be selected into $\mathcal{\hat{A}}$ to produce a better solution $\mathcal{\hat{A}}=\{1,2,3,6\}$ and obtain a larger convergence error. Moreover, Algorithm $2$ avoids computing the marginal returns of agents $i=4,5$, so that it speeds up the process of the algorithm. 
\end{example}

Next, we show why IFDI-ASSA guarantees the same suboptimal lower bound and worst-case running time as Theorem \ref{firstperformance}. Since IFDI-ASSA does not change the principle of selecting the added compromised agent for each iteration, the proof of suboptimal lower bound is the same as that in Theorem \ref{firstperformance}. Hence, the suboptimal lower bound of IFDI-ASSA maintains. Especially, the lower bound holds when the attack cost of each agent is the same. Then, we analyze the worst-case running time of IFDI-ASSA. There are at most $|\Omega|$ iterations in step $3$-$18$ if all agents' attack cost is the same. And for each iteration, there are at most
$|\mathcal{V}|$ iterations in step $5$-$10$ since the number of agents that can be selected from set $|\mathcal{V'}|$ decreases and at most
$|\Omega|$ iterations in step $6$-$9$. In addition, the optimization process in step $6$-$9$ needs to be calculated in $\mathcal{O}(n^3 m^3)$ per time step. Therefore, the total worst-case running time is still $\mathcal{O}(|\Omega| |\mathcal{V}| n^3 m^3)$.		

\section{Discussion}\label{V}
In this section, we discuss the impact of attack strategy $\mathbf{\theta}(t)$ with special statistical properties and heterogeneous agents on the analysis of the convergence error's submodularity.
\subsection{Attack Strategy with Special Statistical Properties}
In this part, we consider a type of attacks with special statistical properties, i.e., zero-mean attack vectors with associated covariance matrices $\Sigma$. Then the expected value of $\kappa(\mathcal{A},t)$ in (\ref{add10}) equals to zero. The optimization objective in (\ref{11}) can be reconstructed as $\max_{\mathcal{A}\subseteq \mathcal{V}} \lim_{t\to \infty} \mathbb{E}[\|\mathbf{x}(\mathbf{\mu}^{\mathcal{A}},t)- \mathbf{x}(t)\|] $. Since	
\begin{align}\label{28}
\lim_{t\to \infty} \mathbb{E} [\|\mathbf{x}(\mathbf{\mu}^{\mathcal{A}},t)- \mathbf{x}(t)\|] 
=& \lim_{t\to \infty} \sum_{i\in \mathcal{V}} \mathbb{E}[(\mathbf{x}_i(t)-\mathbf{b})^{2}]  \nonumber \\
=& \lim_{t\to \infty} \sum_{i\in \mathcal{V}} var(\mathbf{x}_i(t)), 
\end{align}
where $\mathbf{b}$ is the final state without attacks. Let $f(\mathcal{A})\triangleq \lim_{t\to \infty} \mathbb{E}[\|\mathbf{x}(\mathbf{\mu}^{\mathcal{A}},t)- \mathbf{x}(t)\|]$. To guarantee the submodularity of $f(\mathcal{A})$, we should meet $f(\mathcal{A}) \leq f(\mathcal{B})$ and $f(\mathcal{A} \cup \{j\})-f(\mathcal{A})\geq f(\mathcal{B} \cup \{j\})-f(\mathcal{B})$ for any $j\in \mathcal{V} \backslash \mathcal{B}$ with $\mathcal{A}\subseteq \mathcal{B}\subseteq \mathcal{V}$. Recall the fact that $\mathbf{x}_i(t)\in \mathbb{R}^{m}$ and each agent has its dynamics. The difficulties of theoretically analyzing the submodularity of $f(\mathcal{A})$ lie in the following two aspects. One is that $f(\mathcal{A})$ is the implicit function of the attack set $\mathcal{A}$ since the couple and dynamic relationships between the agent's state and the set of the compromised agents. The other is that it is difficult to find a method to transform $f(\mathcal{A})$ to an analyzable equivalent expression with respect to $\mathcal{A}$. There exist some works considering the transformation of the system error, defined as the mean-square error of the follower agent' states from their desired steady-state value, such as \cite{clark2013supermodular} and \cite{barooah2006graph}. Herein, it is shown that $\lim_{t\to \infty} var(\mathbf{x}_i(t))=\frac{1}{2} (L_{ff}^{-1})_{ii}$ and (\ref{28}) is equivalent to $\frac{1}{2} \sum_{i\in \mathcal{V}} (L_{ff}^{-1})_{ii}$ where $L_{ff}$ is the Laplacian matrix of the followers. However, note that \cite{clark2013supermodular} and \cite{barooah2006graph} analyzed the system error due to link noise and it is impossible for (\ref{28}) to be applied since the problem is different.

\subsection{Heterogeneous Agents}
Consider a system with $n$ mobile heterogeneous agents. The dynamics of the $i$-th agent for any $i \in \mathcal{V} $ are
\begin{equation}
\dot{\mathbf{x}}_i(t)=A_i \mathbf{x}_i(t)+B_i \mathbf{u}_i(t),  
\end{equation}
where $A_i$ and $B_i$ are constant matrices. For achieving the output consensus of heterogeneous agents without attacks, observer-based control rules \cite{lu2017cooperative} are usually designed such that the objective $\lim_{t\to \infty} \|\mathbf{y}_i(t)-\mathbf{y}_0(t)\|=0$ where $\mathbf{y}_i(t)$ is the system output and $\mathbf{y}_0(t)$ is the tracking output. Under attacks, the optimization objective in (\ref{11}) can be reconstructed as $\max_{\mathcal{A}\subseteq \mathcal{V}} \lim_{t\to \infty} \sum_{i\in \mathcal{V}} \|\mathbf{y}_i(t)-\mathbf{y}_0(t)\|$. Let $f(\mathcal{A})\triangleq \lim_{t\to \infty} \sum_{i\in \mathcal{V}} \|\mathbf{y}_i(t)-\mathbf{y}_0(t)\|$. If we can find the equivalent expression of $f(\mathcal{A})$ and prove its submodularity, then the proposed algorithms in this paper can be applied to find the suboptimal attack set to maximize the error directly.

\section{Simulation Results}\label{VI}
In this section, we evaluate the performance of the proposed algorithms, i.e., we analyze the impact of the subset of the compromised agents with two-dimensional states and three-dimensional states on convergence error, respectively. Especially, the comparison of Algorithm $2$ (IFDI-ASSA) and Algorithm $1$ (FDI-ASSA) is in Section \ref{Algorithms}.
\subsection{Agents with Two-dimensional States}\label{VI-A}
\subsubsection{Simulation Setup} We consider a consensus process with six agents. For each agent, its dynamics satisfy (\ref{3}) with two-dimensional state variables including relative position and velocity \cite{yu2010some}. Concretely, the local system matrix and input matrix are respectively set as $A=[-0.5~0;1~-1]$ and $B=[0.1~0.1;0.5~0.2]$. It is easily validated that the considered system is controllable and all eigenvalues of the global system matrix in (\ref{4}) are in the left half-plane. Without attacks, the system will achieve consensus asymptotically and we set $t_c=30s$. Since the weight entry $\bar{d}$ will influence the convergence rate, we consider choosing a moderate value to achieve consensus and let $\bar{d}=0.25$. If not specified otherwise, we consider two types of attack cost. One is the same attack cost for each agent (i.e., $c(i)=1$ for any $i\in \mathcal{V}$). The other is that the attack cost for each agent depends on its degree (i.e., $c(i)=d_i$ for any $i\in \mathcal{V}$) since the greater the degree, the more difficult it is to cut its communication link with neighbors. In addition, we set initial states $\mathbf{x}(0)=[-7~-3~-2~-2~0~1~1~-1~2~3~6~2]$, which do not affect the convergence error in this paper. The time-invariant attack satisfies $\mathbf{\theta}(t)=K=[0.25;0.1]$ and the time-variant attack is denoted as $\mathbf{\theta}(t)=K\mathrm{sin}(t)$. Under these parameters, the submodularity of the convergence error is satisfied.

\subsubsection{Compared Algorithms}
We compare four algorithms including the brute force search, random, degree-based, and our algorithm. All algorithms only differ by the selections of the compromised agents. In the brute force search, we enumerate all subsets in the constraint of the required attack cost, and the optimal subset of compromised agents is chosen to act as adversaries. In the random algorithm, a random subset of agents is chosen to serve as adversaries. In the degree-based algorithm, agents are selected to act as adversaries according to the degree, from the highest degree (i.e., the largest number of neighbors) to the lowest degree. Our algorithm is Algorithm $1$, which is also named as the submodular optimization algorithm in this section. 
\subsubsection{Results}
First, under the linear network, we analyze the performance of the four algorithms considered for the problem of choosing the subset of compromised agents. Especially, the results are reported in Fig.\ref{fig1} for the case where all agents are manipulated with the same attack cost, and in Fig. \ref{fig2} for the case where the attack cost of each agent depends on its degree. When the adversary launches the time-invariant attack, Fig. \ref{fig1.a} shows the effectiveness of the submodular optimization algorithm, which results in the maximum convergence error than other algorithms despite varying total attack costs. In Fig. \ref{fig1.b}, either the brute force search or the submodular optimization algorithm provides the greater convergence error when there exists the time-variant attack, which is still independent of limited total attack costs. Therefore, the submodular optimization algorithm almost matches the brute force search while reducing computational complexity than it, which outperforms in terms of the convergence error among the provided random and degree-based algorithms.   

Fig. \ref{fig2.a} shows the stepwise convergence error with the time-invariant attacks under both random and degree-based algorithms while the submodular optimization algorithm still provides the same performance as the brute force search. On the other hand, the results under the time-variant attack, shown in Fig. \ref{fig2.b}, also suggest that Algorithm $1$ outperforms both random and degree-based algorithms despite the varying total attack cost and different attack strategies affect the size of the convergence error.

Next, we further consider the effects of network structures on the convergence error since the convergence error is also a function of the network structures. The linear graph, the cycle graph, and the fixed graph where $L=[1~ -1~ 0~ 0~ 0~ 0;-1~ 3~ -1~ -1~ 0~ 0;0~ -1~ 2~ 0~ -1~ 0~ 0;0~ -1~ 0~ 2~ -1~ 0;0~ 0~ -1~ -1~ 3~ -1;0~ 0~ 0~ 0~ -1~ 1]$ connected with six agents are analyzed for the problem of selecting the subset of compromised agents with different attack costs in the submodular optimization algorithm. The results are plotted in Fig. \ref{fig4}. From Fig. \ref{fig4.a}, the convergence errors in both linear graph and cycle graph are greater than that in the fixed graph. Similarly, adding a compromised agent in the linear graph leads to the worst performance compared to the cycle graph and fixed graph when the attack cost satisfies $c(i)=d_i$, shown in Fig. \ref{fig4.b}. Note that the convergence error decreases as the total attack cost increases under the fixed graph, implying that the submodularity of the convergence error is not satisfied. Furthermore, it validates the impact of network structure on submodularity. 

\begin{figure}[t]
	\centering
	\subfigure[]{\label{fig1.a}
		\includegraphics[width=0.225\textwidth]{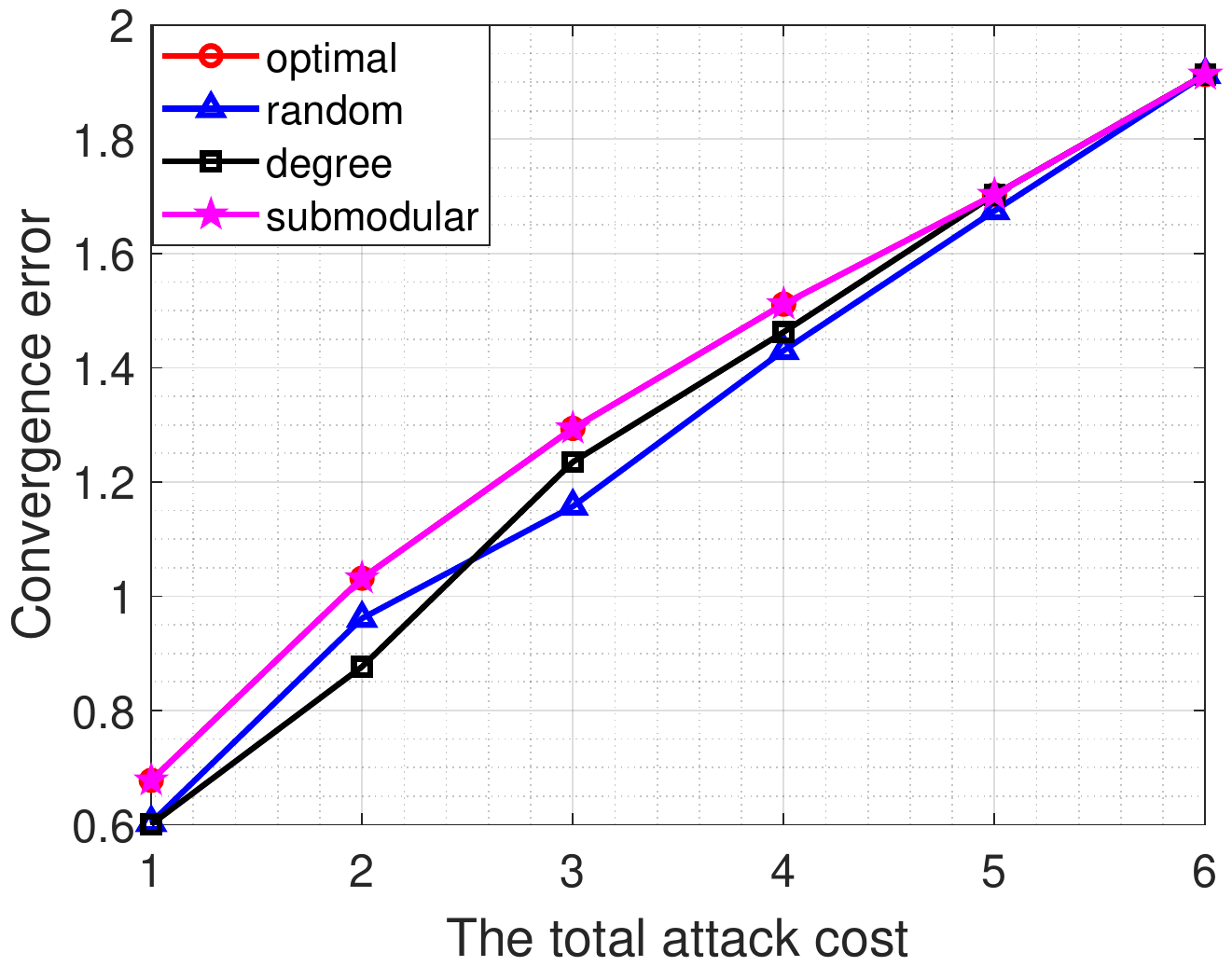}
	}
	\subfigure[]{\label{fig1.b}
		\includegraphics[width=0.225\textwidth]{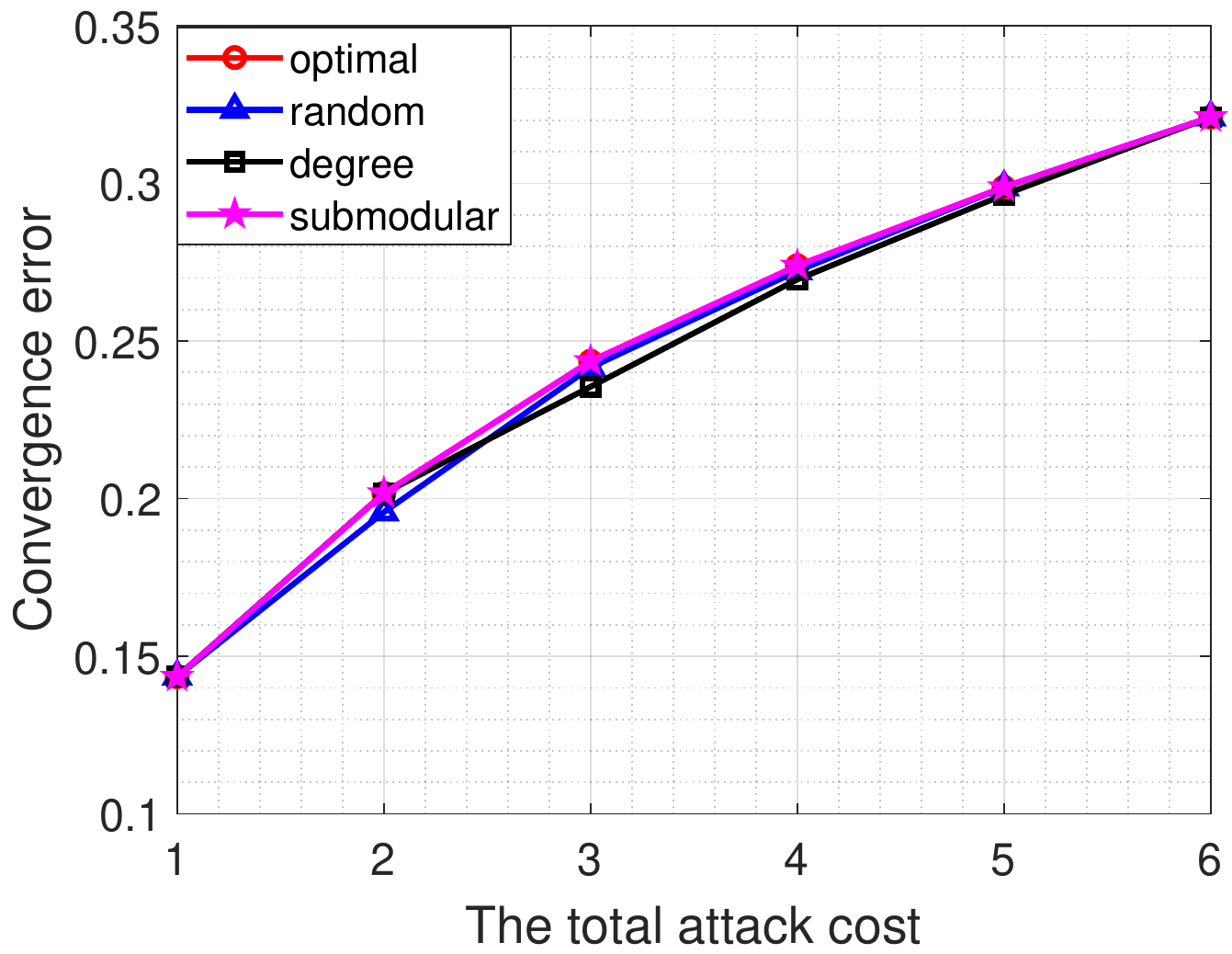}	
	}
	\caption{The convergence error with attack cost $c(i)=1$ for any $i\in \mathcal{V}$ in the linear graph. (a) The time-invariant attack (b) The time-variant attack}
	
	\label{fig1}
	\vspace*{-10pt}
\end{figure}

\begin{figure}[t]
	\centering
	\subfigure[]{\label{fig2.a}
		\includegraphics[width=0.225\textwidth]{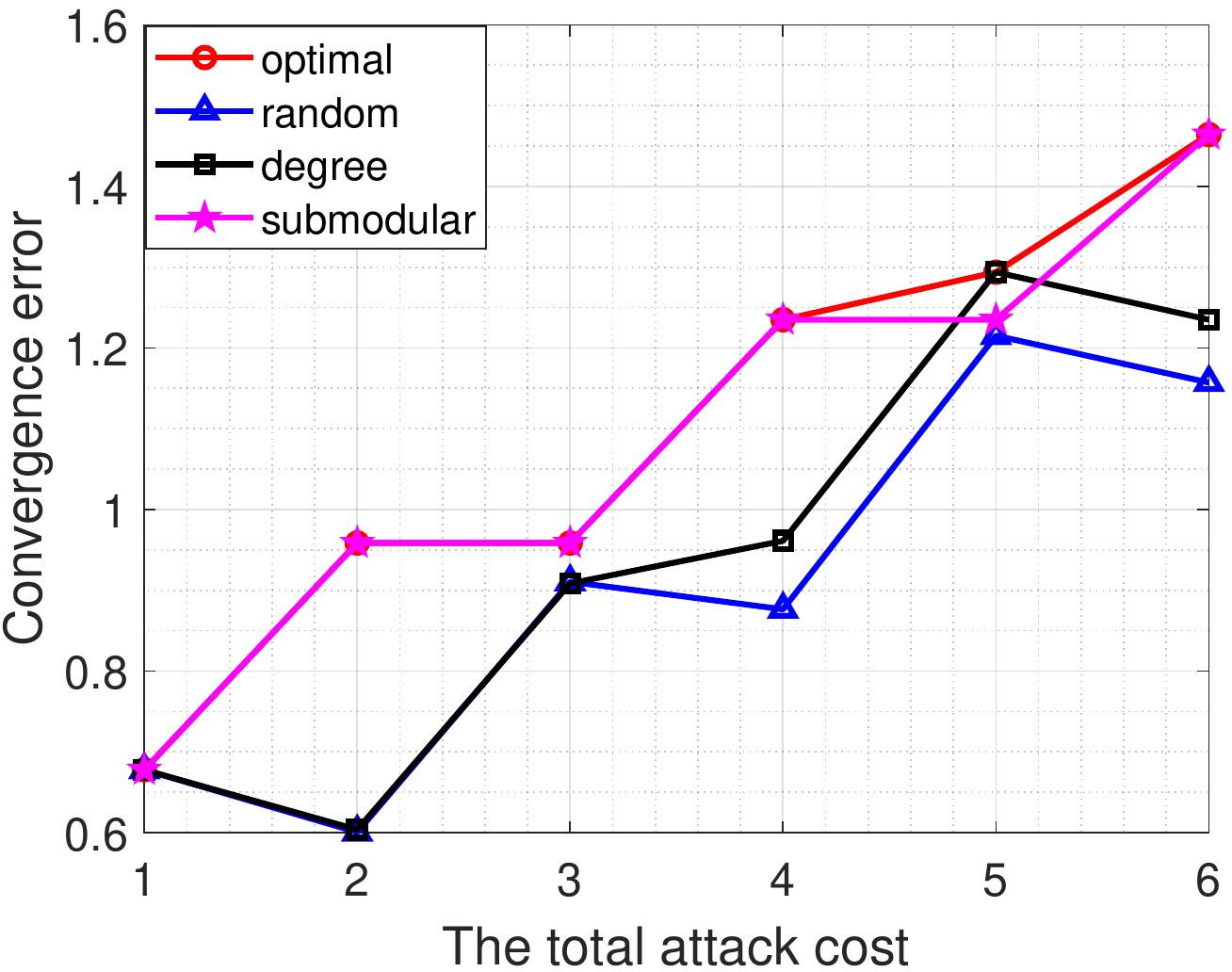}
	}
	\subfigure[]{\label{fig2.b}
		\includegraphics[width=0.225\textwidth]{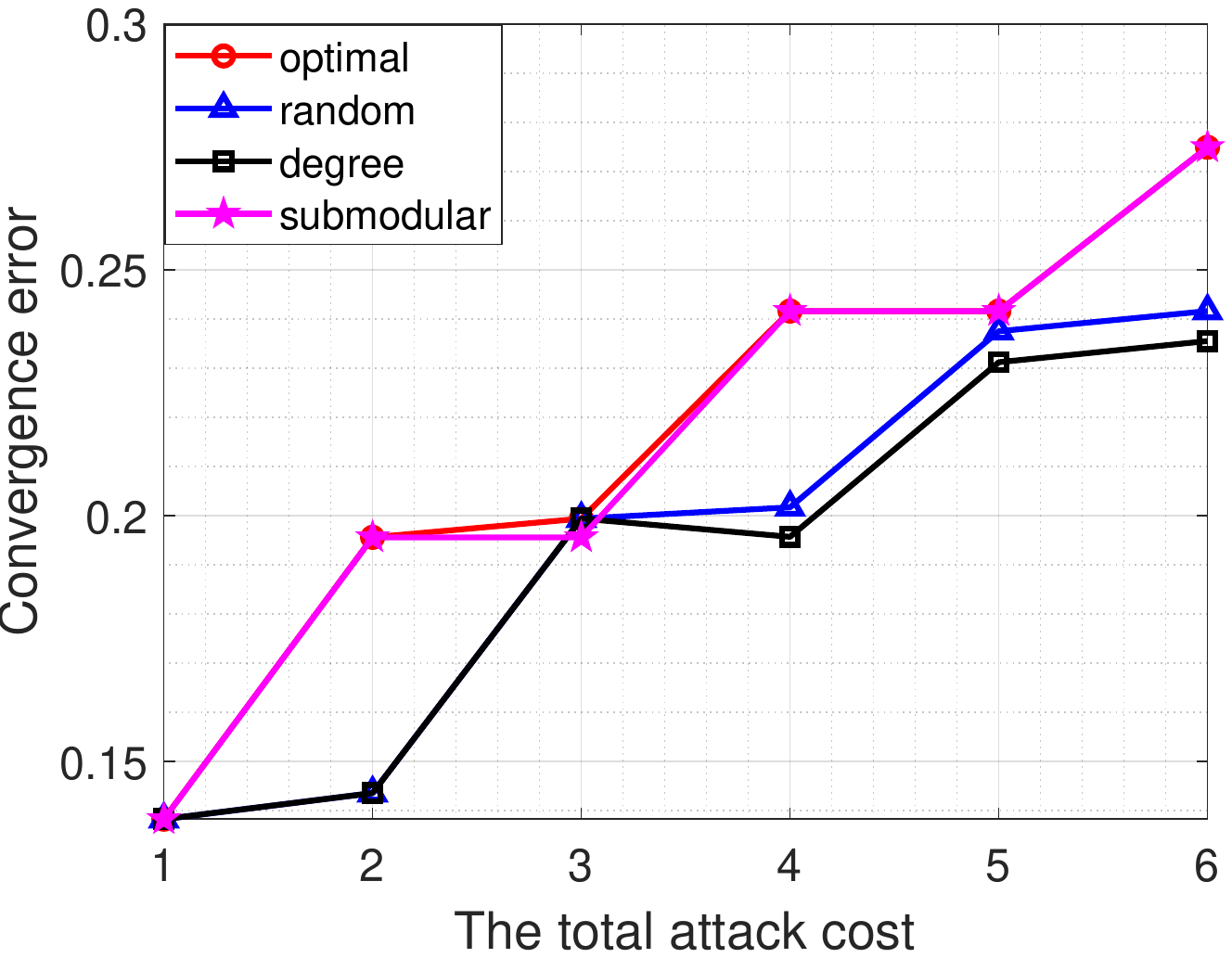}	
	}
	\caption{The convergence error with attack cost $c(i)=d_i$ for any $i\in \mathcal{V}$ in the linear graph. (a) The time-invariant attack (b) The time-variant attack}
	
	\label{fig2}
	\vspace*{-10pt}
\end{figure}

	\begin{figure}[t]
	\centering
	\subfigure[]{\label{fig4.a}
		\includegraphics[width=0.225\textwidth]{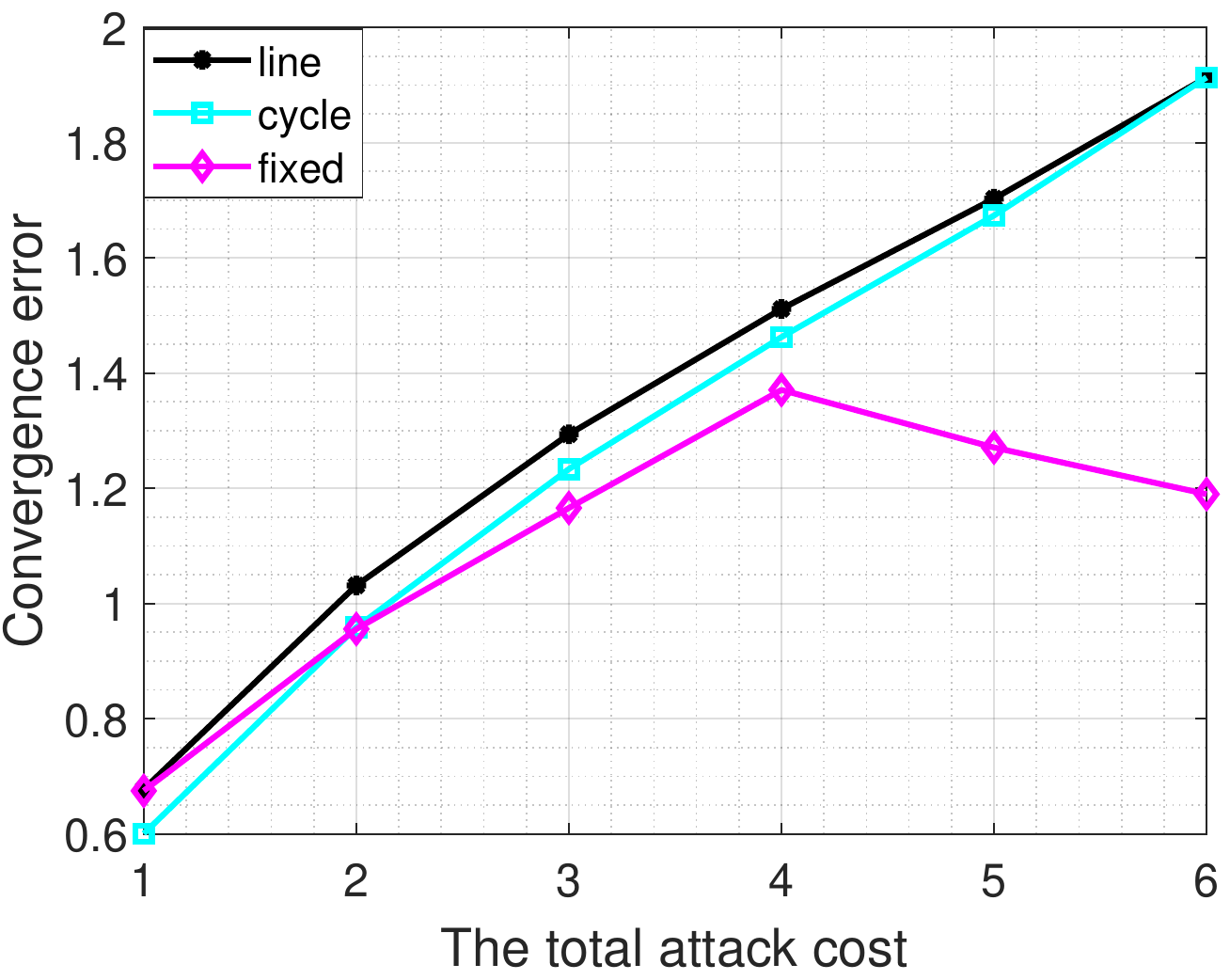}
	}
	\subfigure[]{\label{fig4.b}
		\includegraphics[width=0.225\textwidth]{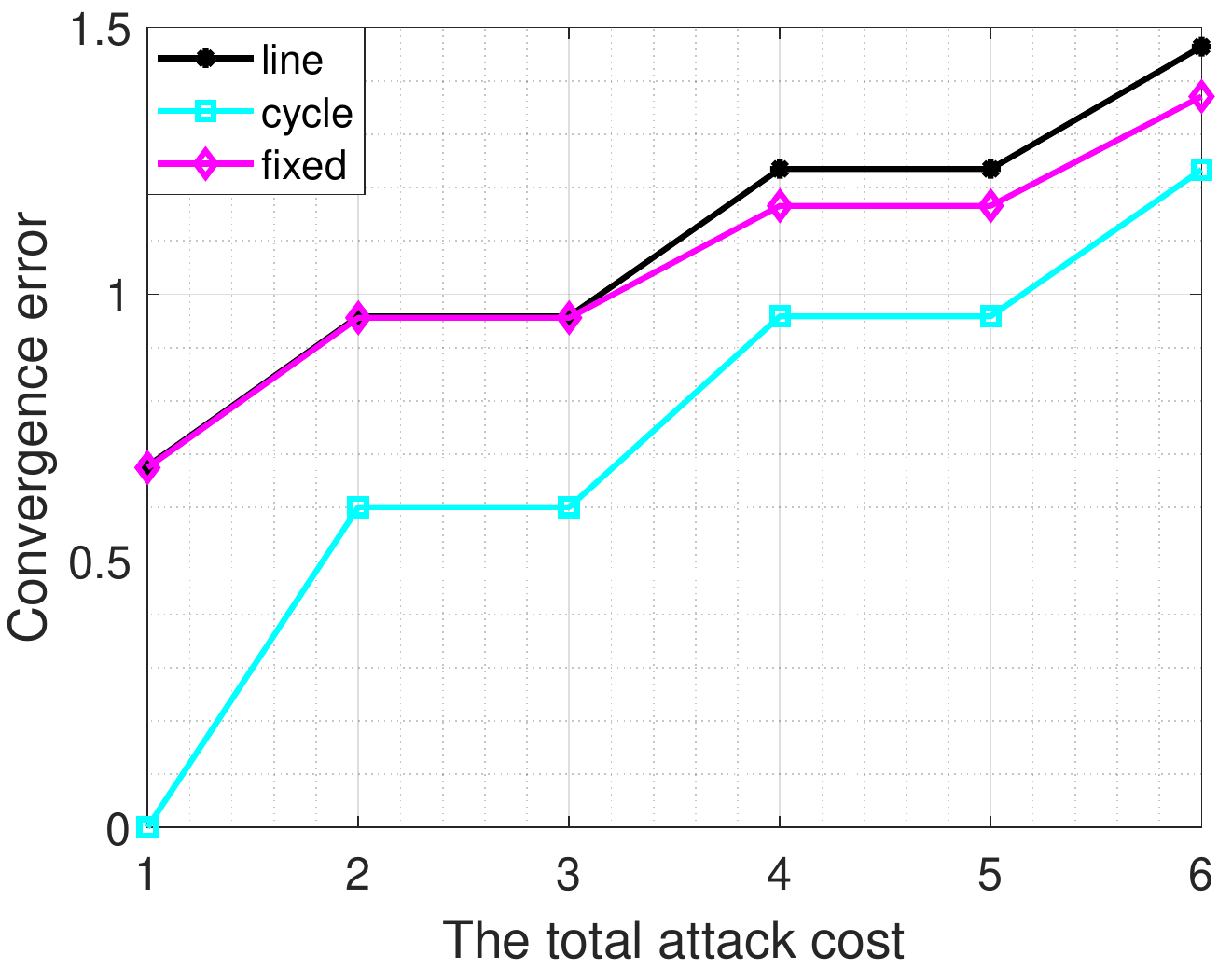}	
	}
	\caption{The convergence error with different network structures under the submodular optimization algorithm. (a) Attack cost $c(i)=1$ (b) Attack cost $c(i)=d_i$}
	
	\label{fig4}
	\vspace*{-10pt}
\end{figure}

\subsection{Agents with Three-dimensional States}
\subsubsection{Simulation Setup} In this part, we consider a network with $50$ autonomous vehicles (i.e., agents) randomly deployed in a $100\times 100m$ area, and the communication distance of each agent is $15m$. The dynamics of each agent satisfy (\ref{3}) with three-dimensional states, including the rate of the pitch angle, the pitch angle, and the depth \cite{rezaee2020almost}. Concretely, the local system matrix and input matrix are respectively set as $A=[-0.4037~-0.2052~0;-0.684~-0.8825~0;-0.1175~-0.2875~-0.3]$ and $B=[0.02394~0~0;0~0.01371~0;0~0~-0.00146]$.
The time-invariant attack satisfies $K=[0.25;0.1;0.2]$ and other parameters are set as before.
\subsubsection{Compared Algorithms}
We consider the four algorithms proposed in the previous part.
\subsubsection{Results}
The results are plotted in Fig. \ref{fig5.a} for the case where each agent has the same attack cost, and the network structure is shown in Fig. \ref{fig6.a} where the green nodes are the selected compromised agents when the total attack cost satisfies $\Omega=6$. As the total attack cost rises from $\Omega=1$ to $\Omega=6$, the trajectory of the selected subset is as follows
\begin{align*}
\{7\}\rightarrow \{7~ 12\} \rightarrow \{7~ 12~ 19\} \rightarrow\{7~ 12~ 19~ 50\}\\
\rightarrow \{7~ 12~ 19~ 29~ 50\} \rightarrow \{7~ 12~ 19~ 27~ 29~ 50\}.
\end{align*}
In Fig. \ref{fig5.b}, the results are for the case where the attack cost of each agent is its degree (i.e., different attack cost) and the network structure is shown in Fig. \ref{fig6.b} where the total attack cost satisfies $\Omega=6$. As the total attack cost rises from $\Omega=1$ to $\Omega=6$, the trajectory of the selected subset is as follows
\begin{align*}
\{24\}\rightarrow \{24~ 44\} \rightarrow \{24~ 44\} \rightarrow\{2~24~ 44\}\\
\rightarrow \{2~24~ 44\} \rightarrow \{2~5~24~ 44\}.
\end{align*}
In large scale multi-agent dynamical systems, the proposed algorithm still produces a larger convergence error than the random and the degree-based algorithms, which almost matches the optimal solution under the brute force search.
\begin{figure}[t]
	\centering
	\subfigure[]{\label{fig5.a}
		\includegraphics[width=0.225\textwidth]{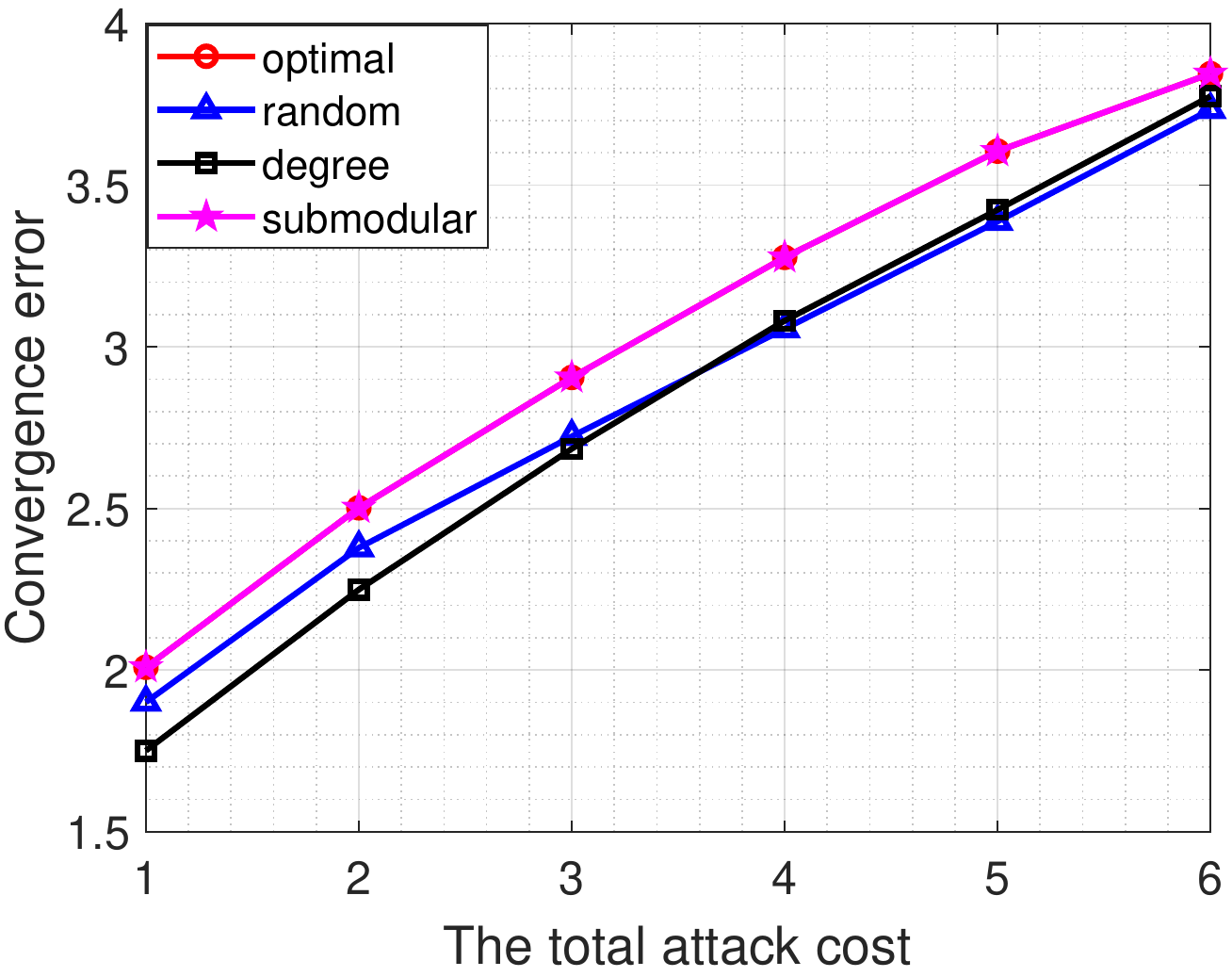}
	}
	\subfigure[]{\label{fig5.b}
		\includegraphics[width=0.225\textwidth]{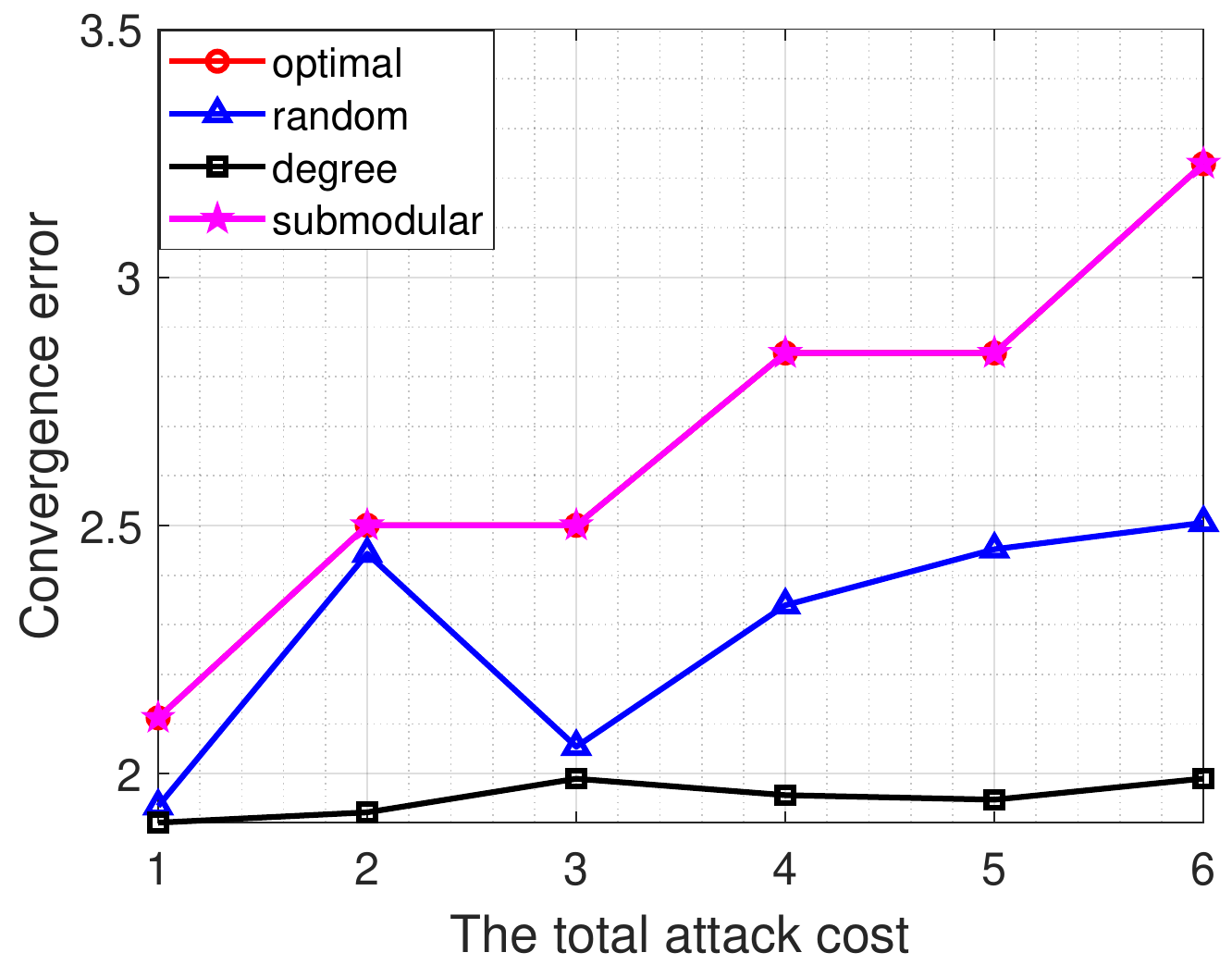}	
	}
	\caption{The convergence error with the time-invariant attack under the random network. (a) Attack cost $c(i)=1$ (b) Attack cost $c(i)=d_i$}
	
	\label{fig5}
	\vspace*{-10pt}
\end{figure}

\begin{figure}[t]
	\centering
	\subfigure[]{\label{fig6.a}
		\includegraphics[width=0.225\textwidth]{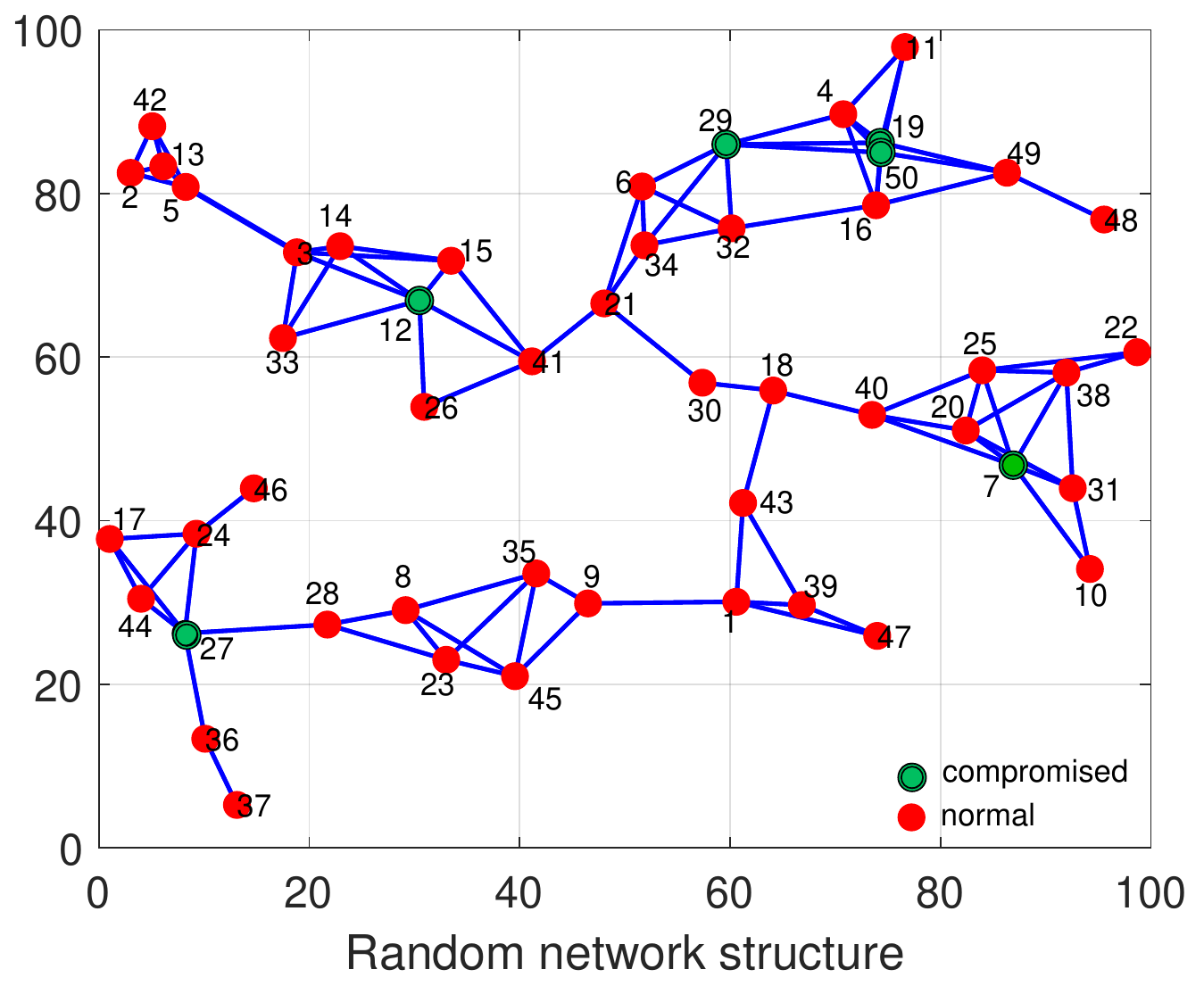}
	}
	\subfigure[]{\label{fig6.b}
		\includegraphics[width=0.225\textwidth]{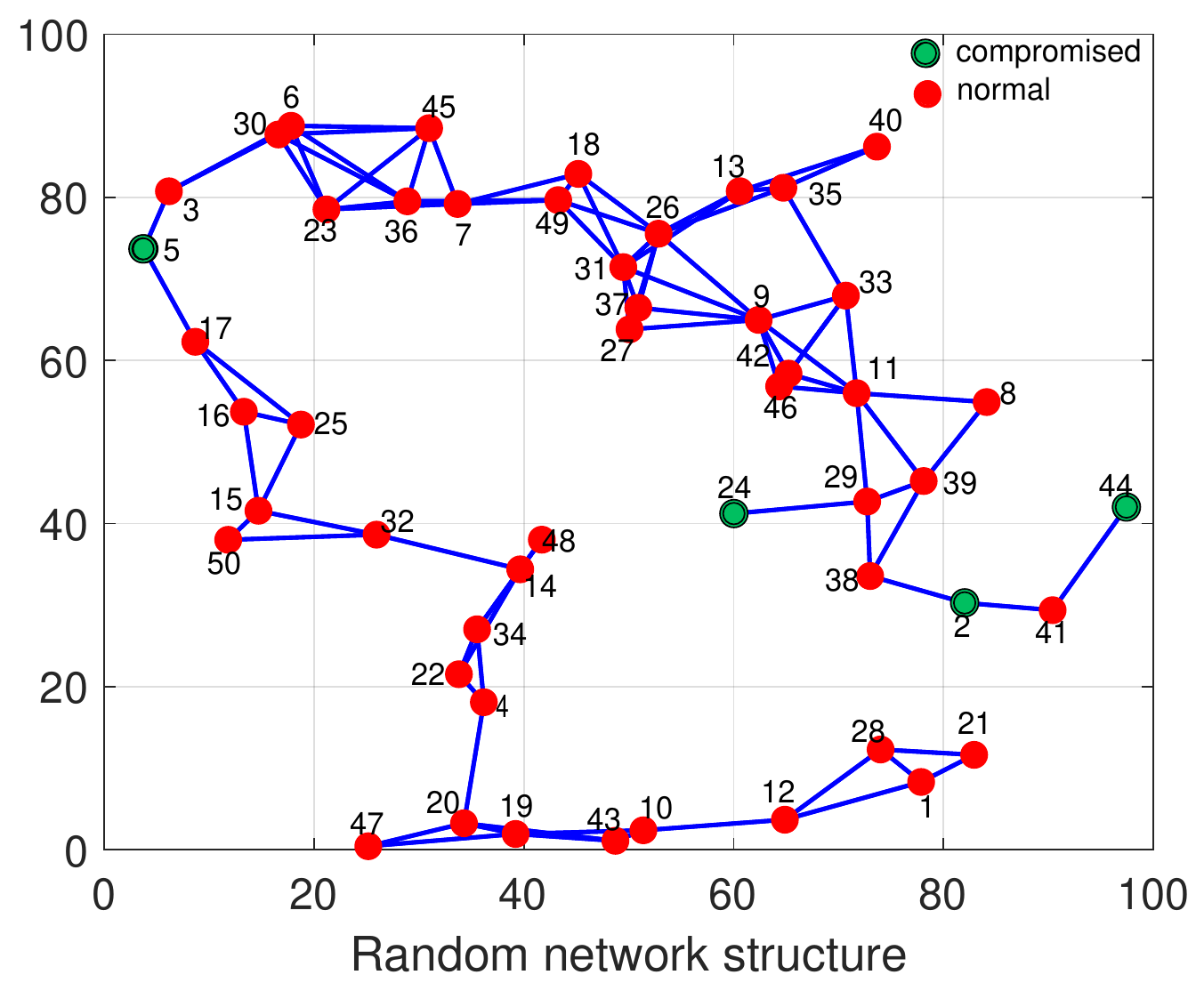}	
	}
	\caption{Different random network structures. (a) Attack cost $c(i)=1$ (b) Attack cost $c(i)=d_i$}
	
	\label{fig6}
	\vspace*{-10pt}
\end{figure}

\begin{figure}[t]
	\centering
	\subfigure[]{\label{fig7a}
		\includegraphics[width=0.35\textwidth]{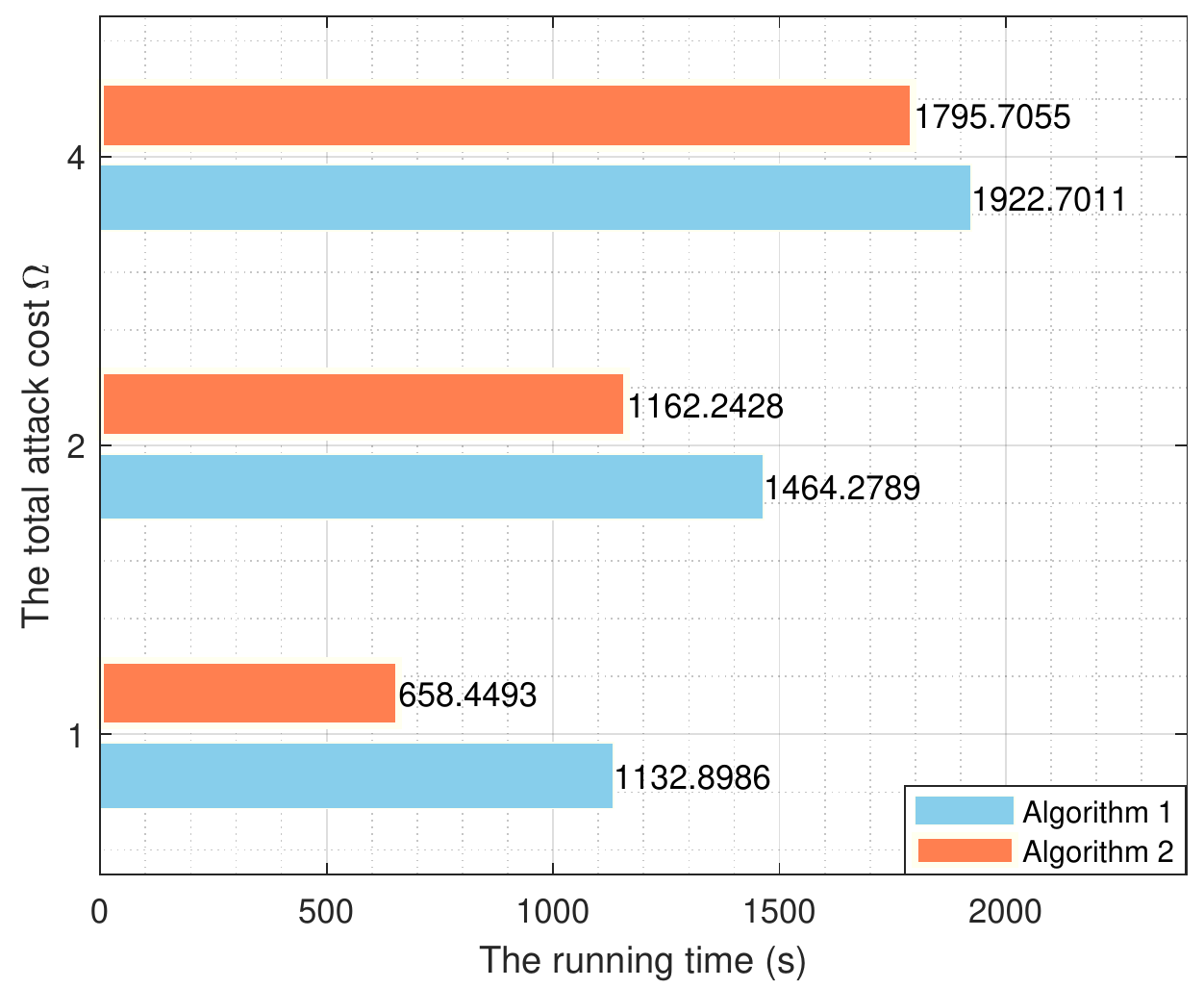}
	}
	
	\subfigure[]{\label{fig7b}
		\includegraphics[width=0.35\textwidth]{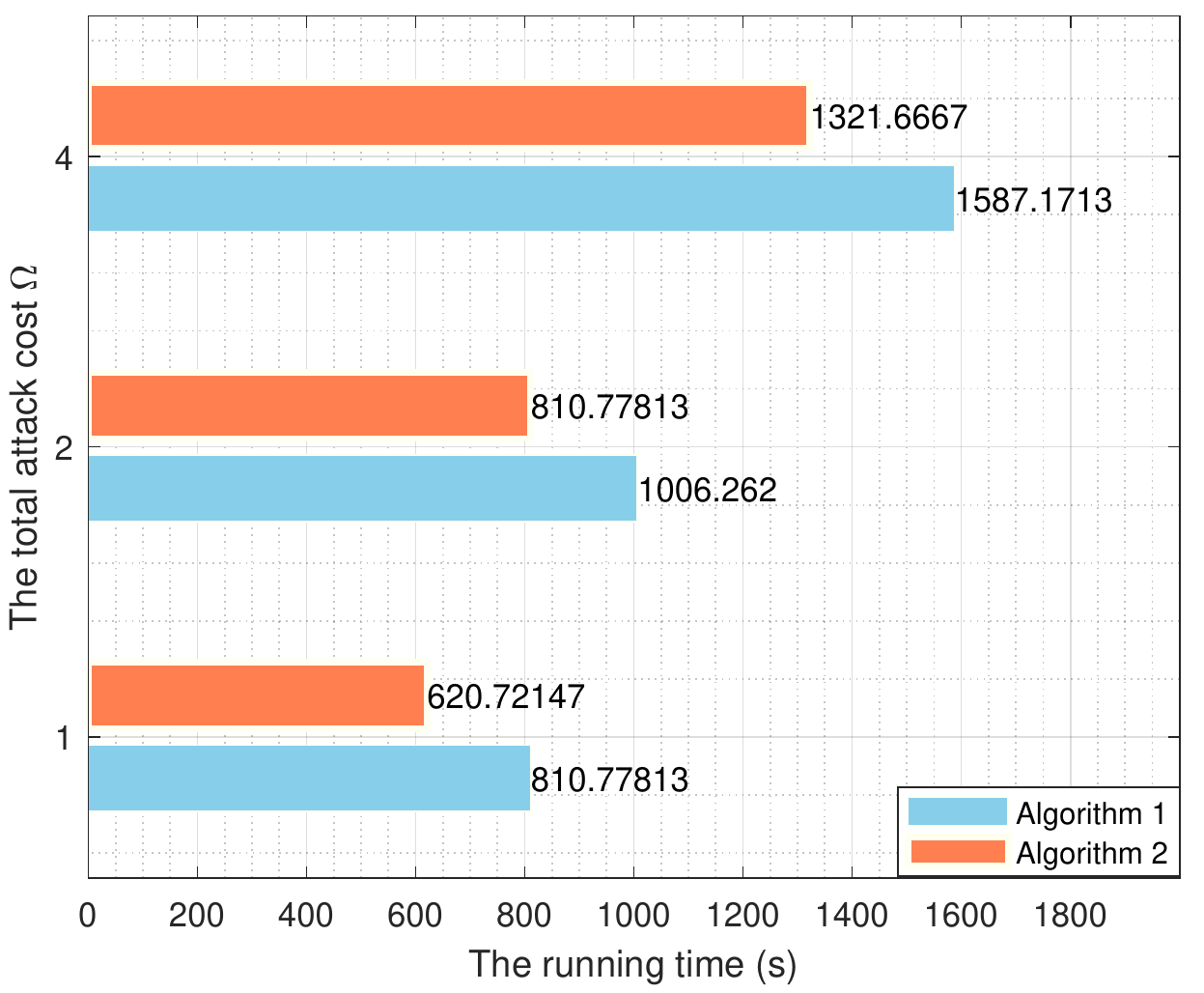}
	}
	\caption{Comparison results on the running time of Algorithm $1$ and $2$ under time-invariant attacks. (a) Attack cost $c(i)=1$ (b) Attack cost $c(i)=d_i$}
	
	\label{fig7}
	\vspace*{-10pt}
\end{figure}

\subsection{Comparison Results under Different Attack Strategies}
All basic parameter settings are set as the same as Section \ref{VI-A}.    
First, we validate the results of the attack strategy $K \mathrm{cost}(t)$. We choose $t_c=30 \in [9\pi, 10\pi]$ and $t_c=60 \in [19\pi, 20\pi]$ where $t_c=30$ is in the second half and $t_c=60$ is in the first half of cycles of $2\pi$. When the attack cost $c(i)=1$ for any $i\in \mathcal{V}$, we emphasize on the results of the total attack cost $\Omega=2$ for the sake of analysis. Based on Algorithm $1$, the obtained solution $\mathcal{\hat{A}}=\{1,2\}$. Let $\mathcal{A}=\{1,2\}$, $\mathcal{B}=\{1,2,3\}$ and the added agent $j=4$. When $t_c=30$, the convergence error $f(\{1,2\})=0.1905$, $f(\{1,2,3\})=0.2312$, 
$f(\{1,2,4\})=0.2375$, and
$f(\{1,2,3,4\})=0.2658$. Thus, the marginal returns $\rho_4(\{1,2\})=0.2375-0.1905=0.047$ and $\rho_4(\{1,2,3\})=0.2658-0.2312=0.0346$. It is easy to infer that there exists $j=4$ such that $\rho_j(\mathcal{A})> \rho_j(\mathcal{B})$ and the submodularity of the convergence error is established when $t_c=30$. Similarly, when $t_c=60$, we have the attack set solution $\mathcal{\hat{A}}=\{3,5\}$ and $\rho_j(\mathcal{A})\geq \rho_j(\mathcal{B})$ always holds true for $j\in \mathcal{V} \backslash \mathcal{B}$ in all possible set $\mathcal{A} \subseteq \mathcal{B} \subseteq \mathcal{V}$. 

Next, we analyze the results of attack strategy $\mathbf{\theta}(t)=K \mathrm{sin}(t)$ and $\mathbf{\theta}(t)=Ke^{-t}$. We find that the submodularity of the convergence error will not be effected by the given convergence time when $\mathbf{\theta}(t)=K \mathrm{sin}(t)$ and $\mathbf{\theta}(t)=Ke^{-t}$, while the selected attack set is different when $\mathbf{\theta}(t)=K\mathrm{sin}(t)$. In other words, if the system suffers from attack strategy $\mathbf{\theta}(t)=K\mathrm{sin}(t)$ and $\mathbf{\theta}(t)=Ke^{-t}$, the convergence error is submodular without constraints, which becomes the nature property of the system.

Finally, we show the results when the attack strategy has special statistical properties. the function \emph{randn} returns a sample of normally distributed random numbers with mean $0$ and variance $1$.
All results are summarized in Table \ref{table1}, where the data under a stochastic attack strategy represents an ensemble average of $1000$ trials. We find that the convergence error does not satisfy the submodularity when the attack strategy is stochastic, while the constant attack $K$ does not influence its submodularity.

\begin{table}[h]
	\setlength{\abovecaptionskip}{-12pt}
	\caption{THE IMPACT OF DIFFERENT ATTACK STRATEGIES ON SUBMODULARITY OF THE CONVERGENCE ERROR}
	\label{table1}
	\begin{center}
		\begin{tabular}{c c c c}
			%\hline
			\specialrule{0.15em}{3pt}{3pt}%adjust the gap between the lines in table
			\tabincell{c}{\textbf{Attack}\\ \textbf{form}} 
			& \tabincell{c}{\textbf{Convergence} \\  \textbf{time} $t_c/s$}
			& \tabincell{c}{\textbf{Total attack} \\ \textbf{cost} $\Omega=2$}
			& \textbf{Submodular}\\

			\specialrule{0.05em}{3pt}{3pt}
			\multirow{4}{*}{$K$}
			& $30$
			&\tabincell{c}{$\mathcal{\hat{A}}=\{1,2\}:$\\ $f(\mathcal{\hat{A}})=1.0315$}
			& $\checkmark$\\
			\specialrule{0.0em}{3pt}{3pt}
			& $60$ 
			&  \tabincell{c}{$\mathcal{\hat{A}}=\{1,2\}:$\\ $f(\mathcal{\hat{A}})=1.0315$}
			&$\checkmark$ \\

			\specialrule{0.05em}{3pt}{3pt}
			\multirow{4}{*}{$K \mathrm{cost}(t)$}
			& $30$
			&\tabincell{c}{$\mathcal{\hat{A}}=\{1,2\}:$\\ $f(\mathcal{\hat{A}})=0.1905$}
			&$\checkmark$\\
			\specialrule{0.0em}{3pt}{3pt}
			& $60$ 
			&  \tabincell{c}{$\mathcal{\hat{A}}=\{3,5\}:$\\ $f(\mathcal{\hat{A}})=0.2948$}
			&$\checkmark$ \\

			\specialrule{0.05em}{3pt}{3pt}
			\multirow{4}{*}{$K \mathrm{sin}(t)$}
			& $30$
			& \tabincell{c}{$\mathcal{\hat{A}}=\{3,5\}:$\\ $f(\mathcal{\hat{A}})=0.2017$} 
			& $\checkmark$\\
			\specialrule{0.0em}{3pt}{3pt}
			& $60$ 
			& \tabincell{c}{$\mathcal{\hat{A}}=\{1,2\}:$\\ $f(\mathcal{\hat{A}})=0.3379$}
			&$\checkmark$ \\

			\specialrule{0.05em}{3pt}{3pt}
			\multirow{4}{*}{$Ke^{-t}$}
			& $30$
			& \tabincell{c}{$\mathcal{\hat{A}}=\{1,2\}:$\\ $f(\mathcal{\hat{A}})=4.3e^{-6}$} 
			& $\checkmark$\\
			\specialrule{0.0em}{3pt}{3pt}
			& $60$ 
			& \tabincell{c}{$\mathcal{\hat{A}}=\{1,2\}:$\\ $f(\mathcal{\hat{A}})=8.15e^{-13}$}
			&$\checkmark$ \\

			\specialrule{0.05em}{3pt}{3pt}
			\multirow{4}{*}{$K\mathrm{randn}$}
			& $30$
			& \tabincell{c}{$\mathcal{\hat{A}}=\{1,2\}:$\\ $f(\mathcal{\hat{A}})=0.0400$} 
			& No \\
			\specialrule{0.0em}{3pt}{3pt}
			& $60$ 
			& \tabincell{c}{$\mathcal{\hat{A}}=\{1,2\}:$\\ $f(\mathcal{\hat{A}})=0.0390$}
			& No \\
			
			\specialrule{0.15em}{3pt}{3pt}
		\end{tabular}
	\end{center}
\end{table}

\subsection{Comparison of Algorithms}\label{Algorithms}
The comparison of the running time between Algorithm $1$ and Algorithm $2$ is shown in Fig. \ref{fig7}. It illustrates that Algorithm $2$ significantly reduces the running time when the system suffers from time-invariant attacks. When the attack cost $c(i)=1$ for any $i\in \mathcal{V}$, Fig. \ref{fig7a} shows the attenuation of the running time decreases as the total attack cost increases. Compared with Fig. \ref{fig7b}, we find that Algorithm $2$ has better performance in terms of the running time when the attack cost of each agent $c(i)=d_i$. 
In addition,
when $\Omega=1$, Algorithm $1$ needs to run $810.778127s\approx 0.225$ hours.
Based on the worst-case running time $T_w \sim \mathcal{O}(|\Omega| |\mathcal{V}| n^3 m^3)$, we know the worst-case running time depends on the number of agents, the dimension of the state, and the given total attack cost. When the number of the agents $n=50$ and the dimensions of the state $m=2$, Algorithm $1$ will run about $1085$ hours, which takes a long time. Algorithm $2$ can save $268$ hours, which accounts for $24.7\%$ running time of Algorithm $1$. In large-scale systems, as the number of agents increases, Algorithm $2$ takes less running time than Algorithm $1$.

\section{Conclusion}\label{VII}
In this paper, we investigated the FDI attack design problem where the adversary considers how to select the subset of agents to inject false data subject to limited cost. We showed the submodularity optimization theory is a helpful tool to deal with the problem, which is NP-hard. First, we proved the submodularity of the convergence error under deterministic time-invariant and time-variant attacks, respectively. Then, we developed FDI-ASSA and IFDI-ASSA to obtain the near-optimal compromised subset solution in polynomial time by combining submodularity and the greedy algorithm. Furthermore, we derived the gap between the obtained solution and the optimal one. In addition, we revealed the effects of dimensions of the state on the running time under the proposed algorithms. Extensive simulation results show that the proposed algorithms provide a greater convergence error than the random and degree-based algorithms regardless of the varying total attack cost and different attack strategies.  
Future works include developing the proposed attack selection algorithm for the stealthy FDI attack and taking the lossy network into consideration for better designing highly robust secure algorithms.

\bibliographystyle{IEEEtran}

\textbf{Xiaoyu Luo} (S'19) received B.E. degree in the Department of Automation from Tianjin Universary, Tianjin, China, in 2019.
She is currently pursuing the Ph.D. degree with the Department of Automation, Shanghai Jiao Tong Universary, Shanghai, China. She is a member of Intelligent of Wireless Networking and Cooperative Control group. Her research interests include fault-tolerant control in multi-agent systems, cooperative charging in energy storage system and security of cyber-physical systems.

\textbf{Chengcheng Zhao} received the PhD degree in control science and engineering from Zhejiang University, Hangzhou, China, in 2018. She is currently a research fellow in the Department of Electrical and Computer Engineering, University of Victoria. Her research interests include consensus and distributed optimization, distributed energy management in smart grids, vehicle platoon, and security and privacy in network systems. She received IEEE PESGM 2017 best conference papers award, and one
of her paper was shortlisted in IEEE ICCA 2017 best student paper award finalist. She is a peer reviewer for Automatica, IEEE Transactions on Information Forensics and Security, IEEE Transactions on Industrial Electronics and etc. She was the TPC member for IEEE GLOBECOM 2017, 2018, and IEEE ICC 2018.

\textbf{Chrongrong Fang} received the B.Sc. degree in automation and the Ph.D. degree in control science and engineering from Zhejiang University, Hangzhou, China, in 2015 and 2020, respectively. He is currently an Assistant Professor with the Department of Automation, Shanghai Jiao Tong University, Shanghai, China. His research interests include anomaly detection and diagnosis in cyber-physical systems and cloud networks.

\textbf{Jianping He} (M’15-SM’19) is currently an associate professor in the Department of Automation at Shanghai Jiao Tong University, Shanghai, China. He received the Ph.D. degree in control science and engineering from Zhejiang University, Hangzhou,China, in 2013, and had been a research fellow in the Department of Electrical and Computer Engineering at University of Victoria, Canada, from Dec. 2013 to Mar. 2017. His research interests mainly include the distributed learning, control and optimization,
security and privacy in network systems.Dr. He serves as an Associate Editor for IEEE Open Journal of Vehicular Technology and KSII Trans. Internet and Information Systems. He was also a Guest Editor of IEEE TAC, International Journal of Robust and Nonlinear Control, etc. He was the winner of Outstanding Thesis Award, Chinese Association of Automation, 2015. He received the best paper award from IEEE WCSP’17, the best conference paper award from IEEE PESGM’17, and was a finalist for the best student paper award from IEEE ICCA’17.

\end{document}